\newtheorem{theorem}{Theorem}
\theoremstyle{plain}
\newtheorem{lemma}{Lemma}
\newtheorem{remark}{Remark}
\numberwithin{equation}{section}
\DeclareMathOperator{\esssup}{\rm ess\,sup}
\newcommand{\beq}{\begin{equation}}
\newcommand{\eeq}{\end{equation}}
\email{pasquale.candito@unirc.it}
\email{marano@dmi.unict.it}
\email{abdelkrim.moussaoui@univ-bejaia.dz}
\keywords{Neumann problem, $(p,q)$-Laplacian system, nodal solution}
\thanks{Work performed within the 2016--2018 Research Plan - Intervention Line 2: `Variational Methods and Differential Equations', and partially supported by GNAMPA of INDAM}
\begin{document}
\title{Nodal solutions to a Neumann problem for a class of $(p_1,p_2)$%
-Laplacian systems}
\author[P. Candito]{P. Candito}
\address[P. Candito]{DICEAM\\
Universit\`a degli Studi di Reggio Calabria \\
89100 Reggio Calabria, Italy}
\author[S.A. Marano]{S. A. Marano\dag}
\thanks{\dag Corresponding author}
\address[S.A. Marano]{Dipartimento di Matematica e Informatica \\
Universit\`a degli Studi di Catania \\
Viale A. Doria 6, 95125 Catania, Italy}
\author[A. Moussaoui]{A. Moussaoui}
\address[A. Moussaoui]{Biology Department \\
A. Mira Bejaia University \\
Targa Ouzemour, 06000 Bejaia, Algeria}

\begin{abstract}
Nodal solutions of a parametric $(p_1,p_2)$-Laplacian system, with Neumann
boundary conditions, are obtained by chiefly constructing appropriate
sub-super-solution pairs.
\end{abstract}

\maketitle

\section{Introduction}

Let $\Omega $ be a bounded domain in $\mathbb{R}^{N}$, $N\geq 2$, having a
smooth boundary $\partial \Omega $, let $f,g:\Omega \times \mathbb{R}%
^{2}\rightarrow \mathbb{R}$ be two Carath\'{e}odory functions, and let $%
1<p_{1},p_{2}<N$. Consider the Neumann, quasi-linear, parametric, elliptic
system 
\begin{equation}
\left\{ 
\begin{array}{ll}
\label{p}-\Delta _{p_{1}}u=f(x,u,v)+\lambda h_{1}(x) & \text{in}\;\;\Omega ,
\\ 
-\Delta _{p_{2}}v=g(x,u,v)+\lambda h_{2}(x) & \text{in}\;\;\Omega , \\ 
|\nabla u|^{p_{1}-2}\frac{\partial u}{\partial \eta }=|\nabla v|^{p_{2}-2}%
\frac{\partial v}{\partial \eta }=0 & \text{on}\;\;\partial \Omega.%
\end{array}
\right.  \tag{${\rm P}_{\lambda }$}
\end{equation}
Here, $\eta $ denotes the outward unit normal vector to $\partial \Omega $, $%
\Delta _{p_{i}}$ stands for the $p_{i}$-Laplace operator, i.e., 
\begin{equation*}
\Delta _{p_{i}}u:=\mathrm{div}(|\nabla u|^{p_{i}-2}\nabla u)\quad \forall
\,u\in W^{1,p_{i}}(\Omega ),
\end{equation*}
while $h_{i}\in L_{loc}^{\infty }(\Omega )$ exhibits both a singular
behavior and a change of sign near $\partial \Omega $. Precisely, we set 
\begin{equation}  \label{2}
h_{i}(x):=\mathrm{sgn}(d(x)-\delta )d(x)^{\gamma _{i}}=\left\{ 
\begin{array}{cc}
-d(x)^{\gamma _{i}} & \text{ if }d(x)<\delta , \\ 
d(x)^{\gamma _{i}} & \text{ if }d(x)>\delta ,%
\end{array}%
\right.
\end{equation}
where $0<\delta<\mathrm{diam}(\Omega)$, 
\begin{equation}  \label{16}
\gamma_{i}:=\lambda ^{-\theta p_{i}}(p_{i}-1)-1
\end{equation}
with $\lambda, \theta>0$ large enough, and 
\begin{equation}  \label{defd}
d(x):=\mathrm{dist}(x,\partial \Omega ),\quad x\in \overline{\Omega }.
\end{equation}
The pair $(u,v)\in W^{1,p_{1}}(\Omega )\times W^{1,p_{2}}(\Omega )$ is
called a (weak) solution to problem \eqref{p} provided 
\begin{equation}  \label{defsol}
\left\{ 
\begin{array}{l}
\int_{\Omega }|\nabla u|^{p_{1}-2}\nabla u\nabla \varphi \,\mathrm{d}x
=\int_{\Omega }\left( f(\cdot ,u,v)+\lambda h_{1}\right) \varphi \,\mathrm{d}%
x, \\ 
\phantom{} \\ 
\int_{\Omega }|\nabla v|^{p_{2}-2}\nabla v\nabla \psi \,\mathrm{d}x
=\int_{\Omega }\left( g(\cdot ,u,v)+\lambda h_{2}\right) \psi \,\mathrm{d}x%
\end{array}
\right.
\end{equation}
for all $(\varphi ,\psi )\in W^{1,p_{1}}(\Omega )\times W^{1,p_{2}}(\Omega )$%
. If $u,v$ are \emph{both} sign changing then we say that the solution $(u,v)
$ is nodal. Let us point out that, although $h_{i}$ $(i=1,2)$ is singular,
the integrals $\int_{\Omega }h_{1}\varphi \,\mathrm{d}x$ and $\int_{\Omega
}h_{2}\psi \,\mathrm{d}x$ in \eqref{defsol} take sense, because $-1<\gamma
_{i}<0$; see \eqref{54} below.

This paper establishes the existence of a \emph{nodal solution} of \eqref{p}%
, which turns out negative near $\partial\Omega$; cf. Theorem \ref{T1}. The
assumptions on $f$ and $g$ are $(\mathrm{h}_1)$--$(\mathrm{h}_2)$ in Section %
\ref{S4}. Roughly speaking, $(\mathrm{h}_1)$ requires a standard growth
rate, that makes finite the right-hand side of \eqref{defsol}, while $(%
\mathrm{h}_2)$ is a suitable condition at zero. We first construct a
sub-solution $(\underline{u},\underline{v})$, \emph{positive far from $%
\partial\Omega$}, and a super-solution $(\overline{u},\overline{v})$, \emph{%
negative near $\partial\Omega$}, such that $\underline{u} \leq\overline{u}$, 
$\underline{v}\leq\overline{v}\,$; see Lemma \ref{L3}. From a technical
point of view, it represents the most difficult part of the proof and is
performed by chiefly combining $(\mathrm{h}_2)$ with an auxiliary result
(Lemma \ref{L3}) based upon a nice property (Lemma \ref{L1}) of $C^1_0$%
-functions. After that, sub-super-solution and truncation arguments (cf.
Theorem \ref{T2}) yield the desired conclusion.

The question whether there exist \emph{positive solutions} to \eqref{p} is a
much simpler matter, which we address in Theorem \ref{T3}.

Dirichlet problems for elliptic systems have been thoroughly investigated
since some years, mainly via variational techniques \cite{MR,Ou},
sub-super-solution and truncation methods \cite{CaMo}, or fixed point
theorems \cite{IMP}. The paper \cite{deF} represents an attractive
introduction on the topic, but there is a wealth of good results and the
relavant literature looks daily increasing. For instance, new frameworks are:

\begin{itemize}
\item the existence of constant-sign solutions to \emph{singular} elliptic
systems, where nonlinearities possibly contain convection terms and/or
variable exponents appear \cite{AM,AMT,EPS,MMZ}.

\item the study of elliptic systems with equations driven by a $(p,q)$%
-Laplace like differential operator, i.e., $u\mapsto\Delta_p u+\mu\Delta_q u$%
, where $\mu\geq 0$ while $1<q<p<+\infty$; see \cite{MVV} and the references
therein.
\end{itemize}

As far as we know, much less attention has been paid to Neumann boundary
conditions: a quick search in the Mathematical Reviews shows that relevant
works are about a third of the total.

Surprisingly enough, excepting \cite{M,MMP,MZ}, where solutions with at
least one sign-changing component are obtained, so far we were not able to
find previous results concerning the existence of nodal solutions, neither
for the Dirichlet case nor for the Neumann one.

\section{Preliminaries}

Let $(X,\|\cdot\|)$ be a real Banach space and let $X^*$ be its topological
dual, with duality bracket $\langle\cdot, \cdot\rangle$. An operator $A:X\to
X^*$ is said to be:

\begin{itemize}
\item \emph{bounded} if it maps bounded sets into bounded sets.

\item \emph{coercive} provided $\displaystyle{\lim_{\Vert x\Vert\to+\infty}} 
\frac{\langle A(x),x\rangle}{\Vert x\Vert}=+\infty$.

\item \emph{pseudo-monotone} if $x_n\rightharpoonup x$ in $X$ and $%
\displaystyle{\limsup_{n\to+\infty}}\langle A(x_n),x_n-x\rangle\leq 0$ force 
$\displaystyle{\liminf_{n\to+\infty}}\langle A(x_n),x_n-z\rangle\geq\langle
A(x),x-z\rangle$ for all $z\in X$.

\item \emph{of type $(\mathrm{S})_+$} provided $x_n\rightharpoonup x$ in $X$
and $\displaystyle{\limsup_{n\to+\infty}}\langle A(x_n),x_n-x\rangle\leq 0$
imply $x_n\to x$ in $X$.
\end{itemize}

Recall (see, e.g., \cite[Theorem 2.99]{CLM}) that

\begin{theorem}
\label{brezis} If $X$ is reflexive and $A:X\to X^*$ is bounded, coercive,
and pseudo-monotone then $A(X)=X^*$.
\end{theorem}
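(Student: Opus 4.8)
The plan is to prove surjectivity by a \emph{Galerkin approximation}. Fix $b\in X^{*}$; we look for $x\in X$ with $A(x)=b$. We may assume $X$ separable (the non-separable case follows by running the same scheme over the directed family of all finite-dimensional subspaces, replacing sequences by subnets), and we fix an increasing sequence $X_{1}\subset X_{2}\subset\cdots$ of finite-dimensional subspaces with $\overline{\bigcup_{n}X_{n}}=X$. Coercivity provides an $R>0$ such that $\langle A(x),x\rangle>\Vert b\Vert_{*}\Vert x\Vert$ whenever $\Vert x\Vert\ge R$; in particular $\langle A(x)-b,x\rangle>0$ for every $x$ with $\Vert x\Vert=R$.

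On each $X_{n}$, identified with a Euclidean space through a fixed inner product (so that $X_{n}\cong X_{n}^{*}$), consider the map $g_{n}(x):=A(x)-b$ read through this identification. Since a bounded pseudo-monotone operator is demicontinuous and $X_{n}$ is finite-dimensional, $g_{n}$ is continuous; by the a priori sign condition above and the standard consequence of Brouwer's fixed point theorem (a continuous self-map of a ball whose values point ``outward'' on the sphere must vanish inside), $g_{n}$ has a zero $x_{n}\in X_{n}$ with $\Vert x_{n}\Vert\le R$. Thus $\langle A(x_{n}),w\rangle=\langle b,w\rangle$ for every $w\in X_{n}$; taking $w=x_{n}$ gives $\langle A(x_{n}),x_{n}\rangle=\langle b,x_{n}\rangle$, and since $A$ is bounded the sequence $\{A(x_{n})\}$ is bounded in $X^{*}$.

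By reflexivity, along a subsequence $x_{n}\rightharpoonup x$ in $X$. Pick $v_{n}\in X_{n}$ with $v_{n}\to x$ strongly and write $\langle A(x_{n}),x_{n}-x\rangle=\langle A(x_{n}),x_{n}-v_{n}\rangle+\langle A(x_{n}),v_{n}-x\rangle=\langle b,x_{n}-v_{n}\rangle+\langle A(x_{n}),v_{n}-x\rangle$; the first term tends to $0$ because $x_{n}\rightharpoonup x$ and $v_{n}\to x$, the second because $\{A(x_{n})\}$ is bounded and $v_{n}-x\to 0$ in $X$. Hence $\limsup_{n}\langle A(x_{n}),x_{n}-x\rangle\le 0$, and pseudo-monotonicity yields $\liminf_{n}\langle A(x_{n}),x_{n}-z\rangle\ge\langle A(x),x-z\rangle$ for all $z\in X$. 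For $z\in\bigcup_{m}X_{m}$ we have, for $n$ large, $\langle A(x_{n}),x_{n}-z\rangle=\langle b,x_{n}\rangle-\langle b,z\rangle\to\langle b,x-z\rangle$, so $\langle b,x-z\rangle\ge\langle A(x),x-z\rangle$; by density of $\bigcup_{m}X_{m}$ and continuity in $z$ of both sides, this holds for all $z\in X$, and replacing $z$ by $x\mp w$ gives $\langle b,w\rangle=\langle A(x),w\rangle$ for every $w\in X$, i.e. $A(x)=b$.

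I expect the genuinely delicate points to be: (i) solving the finite-dimensional problems, which rests on the standard but non-elementary fact that a bounded pseudo-monotone operator is demicontinuous, combined with the Brouwer-type existence lemma; and, above all, (ii) the passage to the limit, where it is crucial that the weak limit $x$ be approximable by elements lying \emph{inside the spaces $X_{n}$}, since that is exactly what lets us replace $\langle A(x_{n}),x_{n}-v_{n}\rangle$ by $\langle b,x_{n}-v_{n}\rangle$ and thereby verify the $\limsup$-hypothesis in the definition of pseudo-monotonicity. Extracting the a priori bound from coercivity is routine, and the only extra work needed for non-separable $X$ is the net/subnet version of the same argument.
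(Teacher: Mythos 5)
The paper offers no proof of this theorem: it is stated only with a citation to \cite[Theorem 2.99]{CLM}, so there is nothing in the paper to compare your argument against line by line. Your Galerkin-plus-Brouwer proof is the classical route to this surjectivity result and is correct in the separable case, which is all the paper ever uses (it applies the theorem in $W^{1,p_1}(\Omega)\times W^{1,p_2}(\Omega)$). The a priori bound from coercivity, the finite-dimensional solvability via the ``outward-pointing field'' consequence of Brouwer's theorem, the verification that $\limsup_n\langle A(x_n),x_n-x\rangle\le 0$ by splitting through approximants $v_n\in X_n$, and the endgame where you test with $z\in\bigcup_m X_m$ and then pass to the density closure and polarize $z=x\mp w$, are all sound.

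Two small cautions. First, as you note, the continuity of $g_n$ hinges on the fact that a bounded pseudo-monotone operator is demicontinuous; that is indeed a theorem (it follows by extracting a weakly convergent subsequence from $\{A(x_n)\}$ and identifying its limit through the pseudo-monotonicity inequality), but it deserves either a short proof or an explicit citation, since it is not immediate from the definitions given in the paper. Second, your parenthetical reduction of the non-separable case ``by running the same scheme over subnets'' is not automatic: the paper defines pseudo-monotonicity via \emph{sequences}, and passing to a net-indexed Galerkin family would require a net version of pseudo-monotonicity (or a separate reduction argument, e.g. restricting to a suitable separable closed subspace containing $b$ and the solution). Since the paper only needs the separable case, this is a harmless remark in context, but as stated it is a genuine gap if one wanted full generality.
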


Hereafter, $\Omega$ will denote a bounded domain of the real Euclidean $N$%
-space $(\mathbb{R}^N,|\cdot |)$, $N\geq 2$, with a $C^2$-boundary $%
\partial\Omega$, on which we will employ the $(N-1)$-dimensional Hausdorff
measure $\sigma $, while $\eta (x)$ indicates the outward unit normal vector
to $\partial\Omega $ at its point $x$. Given $\delta>0$, define 
\begin{equation*}
\Omega_{\delta }:=\{x\in \Omega:d(x)<\delta\}.
\end{equation*}
Write $|E|$ for the $N$-dimensional Lebesgue measure of the set $E\subseteq%
\mathbb{R}^{N}$. Let $u,v:\Omega \to\mathbb{R}$ and let $t\in\mathbb{R}$.
The symbol $u\leq v$ means $u(x)\leq v(x)$ for almost every $x\in \Omega$, 
\begin{equation*}
\Omega (u\leq v):=\{x\in \Omega :u(x)\leq v(x)\},\quad t_{\pm}:=\max \{\pm
t,0\},
\end{equation*}
and $r^{\prime }$ denotes the conjugate exponent of $r\in [1,+\infty]$.
Analogously one introduces $\Omega (u\geq v)$, etc. The Sobolev space $%
W^{1,r}(\Omega )$ will be equipped with the norm 
\begin{equation*}
\Vert u\Vert _{1,r}:=\left( \Vert u\Vert _{r}^{r}+\Vert \nabla
u\Vert_{r}^{r}\right) ^{\frac{1}{r}},\quad u\in W^{1,r}(\Omega ),
\end{equation*}
where, as usual, 
\begin{equation*}
\Vert v\Vert _{r}:=\left\{ 
\begin{array}{ll}
\left( \int_{\Omega }|v(x)|^{r}\mathrm{d}x\right) ^{\frac{1}{r}} & \text{ if 
}r<+\infty, \\ 
\phantom{} &  \\ 
\underset{x\in\Omega}{\esssup}\, |v(x)| & \text{ otherwise.}%
\end{array}
\right.
\end{equation*}
Moreover, 
\begin{equation*}
W_{+}^{1,r}(\Omega ) :=\{u\in W^{1,r}(\Omega ):0\leq u\},\quad
W_b^{1,r}(\Omega):=W^{1,r}(\Omega)\cap L^\infty(\Omega),
\end{equation*}
\begin{equation*}
[u,v ]:=\{ w\in W^{1,r}(\Omega): u\leq w\leq v\},\quad C^{1,\tau}_0(%
\overline{\Omega}):=\{ u\in C^{1,\tau}(\overline{\Omega}):
u\lfloor_{\partial\Omega}=0\}.
\end{equation*}
Let $d$ be as in \eqref{defd}, let $1<r<N$, and let $-r<s\leq 0$. It is
known that 
\begin{equation*}
\left(\int_\Omega d(x)^s |u(x)|^r\mathrm{d}x\right)^{\frac{1}{r}}\leq C\Vert
u\Vert _{1,r}\quad\forall\, u\in W^{1,r}(\Omega),
\end{equation*}
with suitable $C>0$; see \cite[Theorem 19.9, case (19.29)]{OK}. Accordingly,
by H\"{o}lder's inequality, if $-1<\beta\leq 0$ then 
\begin{equation}  \label{54}
\int_\Omega|d^\beta u|\, \mathrm{d}x\leq|\Omega|^{\frac{1}{r^{\prime }}}
\left(\int_\Omega d^{\beta r}|u|^r \mathrm{d}x\right)^{\frac{1}{r}} \leq
C|\Omega|^{\frac{1}{r^{\prime }}} \Vert u\Vert _{1,r},\;\; u\in
W^{1,r}(\Omega).
\end{equation}
Although the next auxiliary result is folklore, we shall make its proof.

\begin{lemma}
\label{L1} Suppose $u\in C^{1,\tau }_0(\overline{\Omega })$. Then there
exists $c>0$ such that 
\begin{equation}  \label{uoverd}
\big\Vert d^{-1} u\big\Vert_{C^{0,\frac{\tau}{\tau+1}}(\overline{\Omega}%
)}\leq c\Vert u\Vert_{C^{1,\tau}(\overline{\Omega})}.
\end{equation}
The constant $c$ does not depend on $u$.
\end{lemma}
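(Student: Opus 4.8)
The plan is to work with $w:=u/d$, which is well defined and continuous on $\Omega$ since $d>0$ there, to prove that it is bounded and uniformly H\"older continuous of exponent $\alpha:=\tau/(\tau+1)$ on $\Omega$, and then to observe that it therefore extends uniquely to an element of $C^{0,\alpha}(\overline{\Omega})$ of the same norm (this extension being what $d^{-1}u$ denotes on $\overline{\Omega}$). I will use two standard geometric facts about the $C^{2}$-domain $\Omega$: first, for each $x\in\overline{\Omega}$ and each nearest boundary point $x_{0}\in\partial\Omega$ (so $|x-x_{0}|=d(x)$) the segment $[x_{0},x]$, which coincides with $\{x_{0}+t\,\mathbf{n}(x_{0}):0\le t\le d(x)\}$ for $\mathbf{n}(x_{0})$ the inward unit normal, lies in $\overline{\Omega}$; second, there is $\delta_{0}>0$ such that on $\Omega_{\delta_{0}}$ the nearest-point projection $\pi$ onto $\partial\Omega$ is single valued and $C^{1}$, $|\nabla d|\equiv1$, and $\mathbf{n}\in C^{1}(\partial\Omega)$. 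Since $u=0$ on $\partial\Omega$, the first fact gives $|u(x)|=|u(x)-u(x_{0})|\le\|\nabla u\|_{\infty}\,d(x)$ for every $x\in\overline{\Omega}$, hence $\|w\|_{\infty}\le\|\nabla u\|_{\infty}\le\|u\|_{C^{1,\tau}}$, which takes care of the sup-norm part of \eqref{uoverd}.

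For the H\"older seminorm, fix $x,y\in\Omega$ with $d(x)\ge d(y)$ and set $\rho:=|x-y|$. After disposing of the range $\rho\ge\rho_{0}$ — for a suitably small $\rho_{0}=\rho_{0}(\delta_{0},\tau)\in(0,1)$ — via the crude bound $|w(x)-w(y)|\le2\|w\|_{\infty}$, the strategy is to split into an \emph{interior} regime $d(x)\ge\rho^{1/(\tau+1)}$ and a \emph{boundary} regime $d(x)<\rho^{1/(\tau+1)}$, the threshold $\rho^{1/(\tau+1)}$ being the one that balances the two estimates below. In the interior regime one checks that $\rho\le\tfrac12 d(x)$, so $w$ is Lipschitz on the convex ball $B:=B(x,\tfrac12 d(x))\subset\Omega$; since a.e.\ on $B$ we have $\nabla w=d^{-1}\nabla u-d^{-2}u\,\nabla d$, while $|\nabla d|\le1$ and $|u|\le\|\nabla u\|_{\infty}d$, it follows that $\|\nabla w\|_{L^{\infty}(B)}\le 4\|\nabla u\|_{\infty}/d(x)$ and hence $|w(x)-w(y)|\le 4\|\nabla u\|_{\infty}\,\rho/d(x)\le 4\|\nabla u\|_{\infty}\,\rho^{\alpha}$.

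In the boundary regime both $x$ and $y$ lie in $\Omega_{\delta_{0}}$, and the plan is to compare $w(x)$ and $w(y)$ through the boundary quantities $a_{z}:=\nabla u(\pi(z))\cdot\mathbf{n}(\pi(z))$. The function $g(t):=u\bigl(\pi(x)+t\,\mathbf{n}(\pi(x))\bigr)$ — defined along a segment contained in $\overline{\Omega}$ by the first geometric fact — is $C^{1}$ on $[0,d(x)]$ with $g(0)=0$, $g(d(x))=u(x)$ and $g'(0)=a_{x}$, so $w(x)=d(x)^{-1}\!\int_{0}^{d(x)}g'(s)\,\mathrm ds$; averaging the H\"older estimate $|g'(s)-g'(0)|\le[\nabla u]_{C^{0,\tau}}\,s^{\tau}$ over $[0,d(x)]$ gives $|w(x)-a_{x}|\le[\nabla u]_{C^{0,\tau}}\,d(x)^{\tau}\le[\nabla u]_{C^{0,\tau}}\,\rho^{\alpha}$, and likewise $|w(y)-a_{y}|\le[\nabla u]_{C^{0,\tau}}\,\rho^{\alpha}$. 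Since $|\pi(x)-\pi(y)|\le d(x)+\rho+d(y)\le3\rho^{1/(\tau+1)}$ and $\mathbf{n}\in C^{1}(\partial\Omega)$ (and $0<\tau\le1$), one also gets $|a_{x}-a_{y}|\le C_{\Omega}\|u\|_{C^{1,\tau}}\,|\pi(x)-\pi(y)|^{\tau}\le C'_{\Omega}\|u\|_{C^{1,\tau}}\,\rho^{\alpha}$, and adding the three bounds closes this regime.

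Collecting all cases produces a constant $c>0$, independent of $u$, with $\|w\|_{C^{0,\alpha}(\overline{\Omega})}\le c\|u\|_{C^{1,\tau}(\overline{\Omega})}$, which is \eqref{uoverd}. The one genuinely delicate point is the choice of the splitting exponent $1/(\tau+1)$: it is precisely the value for which the interior gradient bound, of order $\rho/d(x)$, and the boundary approximation error, of order $d(x)^{\tau}$ — the latter produced by averaging the H\"older-continuous normal derivative of $u$ along the segment to the foot point on $\partial\Omega$ — both reduce to $\rho^{\tau/(\tau+1)}$; the $C^{2}$-regularity of $\partial\Omega$ enters only to make $\pi$ and $\mathbf{n}$ well behaved near $\partial\Omega$.
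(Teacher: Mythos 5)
Your proof is correct and follows essentially the same route as the paper's: the same splitting threshold $|x-y|^{1/(\tau+1)}$ between the boundary and interior regimes, the same projection $\Pi$ (your $\pi$) and inward-normal segment, and the same order of ideas, with only minor technical variations (you average $g'$ along the normal segment and compare the boundary traces $a_x,a_y$, whereas the paper uses the Mean Value Theorem to produce intermediate points $\hat x,\hat y$ and compares those; in the interior regime you bound $\nabla(u/d)$ on a ball, whereas the paper does an equivalent direct algebraic decomposition). The balancing role of the exponent $1/(\tau+1)$ is exactly the paper's mechanism as well.
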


\begin{proof}
First of all, observe that $u$ is Lipschitz continuous and one has 
\begin{equation}
|u(x)|\leq \mathrm{Lip}(u)\,d(x)\quad \forall \,x\in \overline{\Omega }.
\label{Lipu}
\end{equation}
The regularity of $\partial \Omega $ yields $\delta \in ]0,1[$, $\Pi \in
C^{1}(\Omega _{\delta },\partial \Omega )$ fulfilling 
\begin{equation}
d(x)=|x-\Pi (x)|,\;\;\frac{x-\Pi (x)}{|x-\Pi (x)|}=-\eta (\Pi (x)),\;\;]\Pi
(x),x]\subseteq \Omega ,\;\;x\in \Omega _{\delta }.  \label{propPi}
\end{equation}
To simplify notation, set $\sigma :=\frac{\tau }{\tau +1}$. Inequality %
\eqref{uoverd} easily follows once we achieve, for some $C_{1}:=C_{1}(%
\Omega)>0$, 
\begin{equation}
\sup \left\{ \frac{\Big\vert\frac{u(x)}{d(x)}-\frac{u(y)}{d(y)}\Big\vert}{%
|x-y|^{\sigma }}:x,y\in \Omega ,\, 0<|x-y|<\frac{\delta }{2}\right\} \leq
C_{1}\,\Vert u\Vert _{C^{1,\tau }(\overline{\Omega })}.  \label{found}
\end{equation}
So, pick $x,y\in \Omega $ such that $0<|x-y|<\frac{\delta }{2}$. If $%
\max\{d(x),d(y)\}\geq \delta $ then $x,y\in \Omega\setminus \Omega_{\delta
/2}$. Consequently, 
\begin{equation*}
\sup_{x\in\Omega\setminus \Omega_{\delta /2}}\Big\vert\nabla\frac{u(x)}{d(x)}%
\Big\vert \leq 2\,\frac{\mathrm{Lip}(u)}{\delta }+4\,\frac{\Vert
u\Vert_{\infty }}{\delta ^{2}} \leq \left( \frac{2}{\delta }+\frac{4}{\delta
^{2}}\right) \Vert u\Vert _{C^{1}(\overline{\Omega })},
\end{equation*}
because $d$ is 1-Lipschitz, and the Mean Value Theorem entails 
\begin{equation}  \label{casezero}
\frac{\Big\vert\frac{u(x)}{d(x)}-\frac{u(y)}{d(y)}\Big\vert}{|x-y|^{\sigma }}%
\leq C_{2}\Vert u\Vert _{C^{1}(\overline{\Omega })}.
\end{equation}
Assume now $d(y)\leq d(x)<\delta $; a similar argument applies when $%
d(x)\leq d(y)<\delta $. Two situations may occur. \newline
1) $d(x)\leq |x-y|^{\frac{1}{\tau +1}}$. Through the above-mentioned result
again, besides \eqref{propPi}, we obtain 
\begin{eqnarray*}
\frac{u(x)}{d(x)} &=&\frac{u(x)-u(\Pi (x))}{|x-\Pi (x)|}=-\nabla u(\hat{x}%
)\eta (\Pi (x)), \\
\frac{u(y)}{d(y)} &=&\frac{u(y)-u(\Pi (y))}{|y-\Pi (y)|}=-\nabla u(\hat{y}%
)\eta (\Pi (y))
\end{eqnarray*}
with appropriate $\hat{x}\in ]\Pi (x),x[$, $\hat{y}\in ]\Pi (y),y[$. This
immediately leads to 
\begin{equation*}
\begin{split}
\Big\vert\frac{u(x)}{d(x)}-\frac{u(y)}{d(y)}\Big\vert& \leq |\nabla u(\hat{x}%
)-\nabla u(\hat{y})| +|\nabla u(\hat{y})|\,|\eta (\Pi (x))-\eta (\Pi (y))| \\
& \leq \Vert u\Vert _{C^{1,\tau }(\overline{\Omega })}\left( |\hat{x}-\hat{y}%
|^{\tau } +\mathrm{Lip}(\eta )\mathrm{Lip}(\Pi )|x-y|\right) .
\end{split}%
\end{equation*}
On the other hand, 
\begin{equation*}
|\hat{x}-\hat{y}|\leq |\hat{x}-x|+|x-y|+|y-\hat{y}|\leq d(x)+|x-y|+d(y)\leq
3|x-y|^{\frac{1}{\tau +1}}
\end{equation*}
as $|x-y|<\frac{\delta }{2}<1$. Therefore, 
\begin{equation}
\Big\vert\frac{u(x)}{d(x)}-\frac{u(y)}{d(y)}\Big\vert\leq C_{3}\Vert
u\Vert_{C^{1,\tau }(\overline{\Omega })}|x-y|^{\sigma }.  \label{caseone}
\end{equation}
2) $d(x)>|x-y|^{\frac{1}{\tau +1}}$. Inequality \eqref{Lipu} gives 
\begin{equation}
\begin{split}
\Big\vert\frac{u(x)}{d(x)}-\frac{u(y)}{d(y)}\Big\vert& \leq \Big\vert\frac{%
u(x)-u(y)}{d(x)}\Big\vert +|u(y)|\Big\vert\frac{d(x)-d(y)}{d(x)d(y)}\Big\vert
\\
& \leq \mathrm{Lip}(u)\frac{|x-y|}{d(x)}+\mathrm{Lip}(u)\,d(y)\frac{|x-y|}{%
d(x)d(y)} \\
& \leq 2\mathrm{Lip}(u)|x-y|^{\sigma }\leq 2\Vert u\Vert _{C^{1,\tau }(%
\overline{\Omega })}|x-y|^{\sigma }.
\end{split}
\label{casetwo}
\end{equation}
Gathering together \eqref{casezero}--\eqref{casetwo} yields \eqref{found}
and completes the proof.
\end{proof}

Let $1<r<+\infty $. The operator $A_{r}:W^{1,r}(\Omega )\rightarrow
(W^{1,r}(\Omega ))^{\ast }$ defined by 
\begin{equation*}
\langle A_{r}(u),\varphi \rangle :=\int_{\Omega }|\nabla u|^{r-2}\nabla
u\nabla \varphi \,\mathrm{d}x\quad \forall \,u,\varphi \in W^{1,r}(\Omega )
\end{equation*}
stems from the negative $r$-Laplacian with homogeneous Neumann boundary
conditions. Proposition 1 in \cite{MMP2011} ensures that it is of type $(%
\mathrm{S})_{+}$ while, taking \cite[Remark 8]{MMT} into account, if $u\in
W^{1,r}(\Omega )\cap L^{\infty }(\Omega )$, $w\in L^{\infty }(\Omega )$, and 
\begin{equation*}
\langle A_{r}(u),\varphi \rangle =\int_{\Omega }w(x)\varphi (x)\,\mathrm{d}%
x\quad \forall \,\varphi \in W^{1,r}(\Omega )
\end{equation*}
then $u\in C^{1,\tau }(\overline{\Omega })$, with suitable $\tau \in ]0,1[$,
as well as $\frac{\partial u}{\partial \eta }=0$ on $\partial \Omega $.

Denote by $\lambda _{1,r}$ the first eigenvalue of $-\Delta_r$ in $%
W^{1,r}_0(\Omega)$. It is known \cite{Le} that $\lambda_{1,r}$ possesses a
unique eigenfunction $\varphi _{1,r}$ enjoying the properties below.

\begin{itemize}
\item $\varphi_{1,r}\in\mathrm{int}(C_+)$, where $C_+:=\{ u\in C^1_0(%
\overline{\Omega}): u\geq 0\}$.

\item $\Vert\varphi_{1,r}\Vert_r=1$.

\item Any other eigenfunction turns out to be a scalar multiple of $%
\varphi_{1,r}$.
\end{itemize}

Finally, we say that $j:\Omega\times\mathbb{R}^2\to\mathbb{R}$ is a Carath%
\'{e}odory function provided

\begin{itemize}
\item $x\mapsto j(x,s,t)$ is measurable for every $(s,t)\in\mathbb{R}^2$, and

\item $(s,t)\mapsto j(x,s,t)$ is continuous for almost all $x\in \Omega$.
\end{itemize}

\section{A sub-super-solution theorem}

\label{S3} This section investigates the existence of solutions to \eqref{p}
without sign information. Recall that $f,g:\Omega\times\mathbb{R}^2\to%
\mathbb{R}$ satisfy Carath\'{e}odory's conditions. The following assumptions
will be posited.

\begin{itemize}
\item[$(\mathrm{a}_{1})$] For every $\rho>0$ there exists $M_\rho>0$ such
that 
\begin{equation*}
\max \{|f(x,s,t)|,|g(x,s,t)|\}\leq M_\rho\quad\text{in}\quad\Omega \times[%
-\rho,\rho]^{2}.
\end{equation*}

\item[$(\mathrm{a}_{2})$] With appropriate $(\underline{u},\underline{v}),(%
\overline{u},\overline{v})\in W_{b}^{1,p_{1}}(\Omega )\times
W_{b}^{1,p_{2}}(\Omega )$ one has $\underline{u}\leq \overline{u}$, $%
\underline{v}\leq \overline{v}$, as well as 
\begin{equation}
\left\{ 
\begin{array}{l}
\int_{\Omega }|\nabla \underline{u}|^{p_{1}-2}\nabla \underline{u}%
\nabla\varphi \,\mathrm{d}x -\int_{\Omega }\left( f(\cdot ,\underline{u}%
,v)+\lambda h_{1}\right) \varphi \,\mathrm{d}x\leq 0, \\ 
\phantom{} \\ 
\int_{\Omega }|\nabla \underline{v}|^{p_{2}-2}\nabla \underline{v}\,\nabla
\psi \,\mathrm{d}x -\int_{\Omega }\left( g(\cdot ,u,\underline{v})+\lambda
h_{2}\right) \psi \,\mathrm{d}x\leq 0,%
\end{array}
\right.  \label{c2}
\end{equation}
\begin{equation}
\left\{ 
\begin{array}{l}
\int_{\Omega }|\nabla \overline{u}|^{p_{1}-2}\nabla \overline{u}%
\,\nabla\varphi \,\mathrm{d}x -\int_{\Omega }\left( f(\cdot ,\overline{u}%
,v)+\lambda h_{1}\right) \varphi \,\mathrm{d}x\geq 0, \\ 
\phantom{} \\ 
\int_{\Omega }|\nabla \overline{v}|^{p_{2}-2}\nabla \overline{v}\,\nabla\psi
\,\mathrm{d}x -\int_{\Omega }\left( g(\cdot ,u,\overline{v})+\lambda
h_{2}\right) \psi \,\mathrm{d}x\geq 0%
\end{array}%
\right.  \label{c3}
\end{equation}
for all $(\varphi ,\psi )\in W_{+}^{1,p_{1}}(\Omega )\times
W_{+}^{1,p_{2}}(\Omega )$, $(u,v)\in W^{1,p_{1}}(\Omega) \times
W^{1,p_{2}}(\Omega )$ such that $(u,v)\in [\underline{u},\overline{u}]\times
[\underline{v},\overline{v}]$.
\end{itemize}

Under $(\mathrm{a}_{1})$, the above integrals involving $f$ and $g$ take
sense, because $\underline{u},\underline{v},\overline{u},\overline{v}$ are
bounded.

\begin{theorem}
\label{T2} Suppose $(\mathrm{a}_{1})$--$(\mathrm{a}_{2})$ hold true. Then,
for every $\lambda \geq 0$, problem \eqref{p} possesses a solution $(u,v)\in
W_{b}^{1,p_{1}}(\Omega )\times W_{b}^{1,p_{2}}(\Omega )$ such that 
\begin{equation}  \label{19}
\underline{u}\leq u\leq \overline{u}\quad \text{and}\quad \underline{v}\leq
v\leq \overline{v}.
\end{equation}
If $\lambda =0$ then $(u,v)\in C^{1,\tau }(\overline{\Omega })\times
C^{1,\tau }(\overline{\Omega })$ with suitable $\tau \in\ ]0,1[$. Moreover, $%
\frac{\partial u}{\partial \eta }=\frac{\partial v}{\partial \eta }=0$ on $%
\partial \Omega $.
\end{theorem}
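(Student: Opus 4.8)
The plan is to prove Theorem~\ref{T2} by a standard sub-super-solution and truncation scheme, reducing the coupled system to two auxiliary single equations with bounded right-hand sides and then invoking surjectivity of pseudo-monotone operators. First, using $(\mathrm{a}_2)$ fix the pair $\underline{u}\le\overline{u}$, $\underline{v}\le\overline{v}$ and, for $w\in W^{1,p_2}(\Omega)$, $z\in W^{1,p_1}(\Omega)$, define the truncation operators $T_1(w):=\min\{\max\{w,\underline{v}\},\overline{v}\}$ and $T_2(z):=\min\{\max\{z,\underline{u}\},\overline{u}\}$, together with the truncated Carath\'eodory nonlinearities
\begin{equation*}
\tilde f(x,s,t):=f\big(x,\min\{\max\{s,\underline{u}(x)\},\overline{u}(x)\},T_1(t)(x)\big),\quad \tilde g(x,s,t):=g\big(x,T_2(s)(x),\min\{\max\{t,\underline{v}(x)\},\overline{v}(x)\}\big),
\end{equation*}
which, by $(\mathrm{a}_1)$ and boundedness of $\underline u,\overline u,\underline v,\overline v$, are bounded by a single constant $M>0$ on all of $\Omega\times\R^2$. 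This kills the growth issue and decouples the sign constraints.

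Next I would set up a fixed-point / Minty–Browder argument. For fixed $v\in[\underline v,\overline v]$ consider the operator $B_1(u):=A_{p_1}(u)-\tilde f(\cdot,u,v)-\lambda h_1$ on $W^{1,p_1}(\Omega)$; since $\tilde f(\cdot,u,v)+\lambda h_1\in L^\infty(\Omega)\subseteq L^{p_1'}(\Omega)$ (the term $\lambda h_1$ is integrable against $W^{1,p_1}$-functions by \eqref{54}, because $-1<\gamma_1<0$), the map $u\mapsto \tilde f(\cdot,u,v)+\lambda h_1$ is a bounded continuous Nemytskii-type perturbation from $W^{1,p_1}(\Omega)$ into its dual, compact by the compact embedding $W^{1,p_1}(\Omega)\hookrightarrow L^{p_1}(\Omega)$. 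Hence $B_1$ is bounded, pseudo-monotone (sum of the $(\mathrm{S})_+$ operator $A_{p_1}$ and a completely continuous map), and coercive because $\langle A_{p_1}(u),u\rangle=\|\nabla u\|_{p_1}^{p_1}$ dominates the at-most-linear (indeed bounded) perturbation — one adds $\|u\|_{p_1}^{p_1}$ by a Poincaré-type argument on $W^{1,p_1}(\Omega)$, or more simply observes that coercivity in the full norm follows since $A_{p_1}$ together with a lower-order bounded term is coercive on $W^{1,p_1}(\Omega)$ after noting that constants must be controlled; in practice one uses the equivalent formulation with the operator $u\mapsto A_{p_1}(u)+|u|^{p_1-2}u-\tilde f-\lambda h_1$ if needed. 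By Theorem~\ref{brezis} there is a solution; a comparison argument using \eqref{c2}, \eqref{c3} with the test functions $\varphi=(u-\overline u)_+$ and $\varphi=(\underline u-u)_+$ shows $\underline u\le u\le\overline u$, so the truncation in the first slot is inactive. The same for the second equation gives $\underline v\le v\le\overline v$. Assembling this into a single operator $\mathcal A$ on $W^{1,p_1}(\Omega)\times W^{1,p_2}(\Omega)$ — equipped with the natural product norm — which is again bounded, coercive, and pseudo-monotone (the cross-coupling enters only through the compact truncated terms), a single application of Theorem~\ref{brezis} produces $(u,v)$ solving the truncated system; the a priori bounds \eqref{19} then force $\tilde f(\cdot,u,v)=f(\cdot,u,v)$ and $\tilde g(\cdot,u,v)=g(\cdot,u,v)$ a.e., so $(u,v)$ solves \eqref{p}. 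Boundedness $(u,v)\in W_b^{1,p_1}(\Omega)\times W_b^{1,p_2}(\Omega)$ is immediate from \eqref{19}.

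For the regularity statement when $\lambda=0$: then the right-hand sides $f(\cdot,u,v)$ and $g(\cdot,u,v)$ lie in $L^\infty(\Omega)$ by $(\mathrm{a}_1)$ and \eqref{19}, so $\langle A_{p_1}(u),\varphi\rangle=\int_\Omega w_1\varphi\,\mathrm dx$ with $w_1:=f(\cdot,u,v)\in L^\infty(\Omega)$ for all $\varphi\in W^{1,p_1}(\Omega)$, and likewise for $v$. The regularity result recalled just after the definition of $A_r$ (from \cite{MMT}, \cite{MMP2011}) then gives $u\in C^{1,\tau}(\overline\Omega)$ with $\frac{\partial u}{\partial\eta}=0$ on $\partial\Omega$, and $v\in C^{1,\tau}(\overline\Omega)$ with $\frac{\partial v}{\partial\eta}=0$ on $\partial\Omega$, possibly after shrinking $\tau\in\ ]0,1[$ to a common value. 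When $\lambda>0$, the singular term $\lambda h_i$ is only in $L^\infty_{loc}$, so this global $C^{1,\tau}$ conclusion is not claimed.

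The main obstacle is coercivity of the solution operator on the full space $W^{1,p_i}(\Omega)$: unlike the Dirichlet case, $\langle A_{p_i}(u),u\rangle=\|\nabla u\|_{p_i}^{p_i}$ vanishes on constants, so one cannot bound $\|u\|_{1,p_i}$ directly. I expect the fix to be routine but worth stating carefully: either work with the shifted operator $A_{p_i}+J_{p_i}$ where $\langle J_{p_i}(u),\varphi\rangle=\int_\Omega|u|^{p_i-2}u\varphi\,\mathrm dx$ (still $(\mathrm S)_+$, bounded, and now the sum is coercive in $\|\cdot\|_{1,p_i}$ since the bounded perturbation $\tilde f+\lambda h_i$ grows slower than $\|u\|_{1,p_i}^{p_i-1}$), observing a posteriori via the comparison bounds that the solution is unchanged because the added term is absorbed consistently — more cleanly, one builds the added $|u|^{p_i-2}u$ into a modified but still Carath\'eodory nonlinearity, applies Theorem~\ref{brezis}, and then uses \eqref{c2}–\eqref{c3} (with the modification matched on both sides) to recover $\underline u\le u\le\overline u$. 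A secondary technical point is verifying that the coupled operator $\mathcal A$ on the product space inherits pseudo-monotonicity; this holds because each diagonal block is $(\mathrm S)_+$ plus compact and the off-diagonal dependence sits inside the compact truncated Nemytskii maps, so weak convergence in the product space plus the $\limsup$ condition splits into the two scalar conditions.
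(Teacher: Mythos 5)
Your proposal follows the same overall scheme as the paper: truncate the nonlinearities to $[\underline u,\overline u]\times[\underline v,\overline v]$, set up a single bounded, coercive, pseudo-monotone operator on the product space $W^{1,p_1}(\Omega)\times W^{1,p_2}(\Omega)$, invoke Theorem~\ref{brezis}, and then test with $(u-\overline u)_+$ and $(\underline u-u)_+$ (and their $v$-analogues) together with \eqref{c2}--\eqref{c3} to recover the order interval, whereupon the truncations become inactive. You also correctly identify the genuine obstacle: $\langle A_{p_i}(u),u\rangle=\|\nabla u\|_{p_i}^{p_i}$ vanishes on constants, so $A_{p_i}$ alone is not coercive on the Neumann space.

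Where you diverge from the paper is in the coercivity repair, and here your sketch is the weaker of the two. You propose shifting by $J_{p_i}(u)=|u|^{p_i-2}u$ and either (a) ``observing a posteriori that the solution is unchanged because the added term is absorbed consistently,'' which as stated is simply false --- adding $|u|^{p_i-2}u$ to one side changes the PDE --- or (b) building a matching term on the right-hand side (presumably $|T_1(u)|^{p_i-2}T_1(u)$, which is bounded) and canceling once \eqref{19} is established. Option (b) does work, via monotonicity of $s\mapsto|s|^{p_i-2}s$ in the comparison step, but you do not pin it down. The paper instead adds the penalty $-\mu\chi_i$, where $\chi_1(x,s)=-(\underline u(x)-s)_+^{p_1-1}+(s-\overline u(x))_+^{p_1-1}$. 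This has two advantages over your shift: it yields the coercivity estimate $\int_\Omega\chi_1(\cdot,u)\,u\,\mathrm dx\ge C_1\|u\|_{p_1}^{p_1}-C_2$ giving a lower bound in the full $W^{1,p_1}$-norm, and it vanishes identically on $[\underline u,\overline u]$, so once \eqref{19} is established the original equation is recovered automatically, with no matching bookkeeping. Moreover in the test with $(u-\overline u)_+$ the penalty contributes $-\mu\int_\Omega(u-\overline u)_+^{p_1}\,\mathrm dx\le 0$, which directly drives the comparison. In short: right architecture, right diagnosis of the coercivity gap, but your proposed repair is hand-waved where the paper's $\chi_i$-penalty is a clean, self-cancelling device; you should adopt it (or fully work out the matched-$J_{p_i}$ variant) rather than leaving two half-stated alternatives. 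Your remarks on \eqref{54} controlling the singular $h_i$-integrals and on the $\lambda=0$ regularity via the recalled result from \cite{MMT} are correct and match the paper.
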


\begin{proof}
Given $u\in W^{1,p_{1}}(\Omega )$, $v\in W^{1,p_{2}}(\Omega )$, we define 
\begin{equation*}
T_{1}(u):=\left\{ 
\begin{array}{ll}
\underline{u} & \text{when }u\leq \underline{u}, \\ 
u & \text{if }\underline{u}\leq u\leq \overline{u}, \\ 
\overline{u} & \text{otherwise,}%
\end{array}
\right. \quad T_{2}(v):=\left\{ 
\begin{array}{ll}
\underline{v} & \text{when }v\leq \underline{v}, \\ 
v & \text{if }\underline{v}\leq v\leq \overline{v}, \\ 
\overline{v} & \text{otherwise.}%
\end{array}
\right.
\end{equation*}
Lemma 2.89 of \cite{CLM} ensures that the functions $T_{i}:W^{1,p_{i}}(%
\Omega )\rightarrow W^{1,p_{i}}(\Omega )$, $i=1,2$, are continuous and
bounded. If $\rho >0$ satisfies 
\begin{equation*}
-\rho \leq \underline{u}\leq \overline{u}\leq \rho ,\quad -\rho \leq%
\underline{v}\leq \overline{v}\leq \rho ,
\end{equation*}
while $\mathcal{N}_{f}$ (resp., $\mathcal{N}_{g}$) denotes the Nemitski
operators associated with $f$ (resp., $g$) then, thanks to $(\mathrm{a}_{1})$%
, the maps 
\begin{equation}
N_{f}\circ (T_{1},T_{2}):W^{1,p_{1}}(\Omega )\times
W^{1,p_{2}}(\Omega)\rightarrow L^{p_{1}^{\prime }}(\Omega )\hookrightarrow
W^{-1,p_{1}}(\Omega),  \label{NF}
\end{equation}
\begin{equation}
N_{g}\circ (T_{1},T_{2}):W^{1,p_{1}}(\Omega )\times
W^{1,p_{2}}(\Omega)\rightarrow L^{p_{2}^{\prime }}(\Omega )\hookrightarrow
W^{-1,p_{2}}(\Omega)  \label{NG}
\end{equation}
enjoy the same property. Moreover, setting 
\begin{equation*}
\chi _{1}(x,s):=-(\underline{u}(x)-s)_{+}^{p_{1}-1}+(s-\overline{u}%
(x))_{+}^{p_{1}-1},\quad (x,s)\in \Omega \times \mathbb{R},
\end{equation*}
\begin{equation*}
\chi _{2}(x,t):=-(\underline{v}(x)-t)_{+}^{p_{2}-1}+(t-\overline{v}%
(x))_{+}^{p_{2}-1},\quad (x,t)\in \Omega \times \mathbb{R},
\end{equation*}
one has 
\begin{equation}
\int_{\Omega }\chi _{1}(\cdot ,u)u\,\mathrm{d}x\geq C_{1}\Vert
u\Vert_{p_{1}}^{p_{1}}-C_{2}\quad \forall \,u\in W^{1,p_{1}}(\Omega ),
\label{35}
\end{equation}
\begin{equation}
\int_{\Omega }\chi _{2}(\cdot ,v)v\,\mathrm{d}x\geq C_{1}^{\prime }\Vert
v\Vert _{p_{2}}^{p_{2}}-C_{2}^{\prime }\quad \forall \,v\in
W^{1,p_{2}}(\Omega )  \label{36}
\end{equation}
with appropriate constants $C_{i},C_{i}^{\prime }>0$; see, e.g., \cite[pp.
95--96]{CLM}. Penalties $\chi _{1}$ and $\chi _{2}$ are involved in the
following auxiliary problem: 
\begin{equation}
\left\{ 
\begin{array}{ll}
-\Delta _{p_{1}}{u}=f_{\mu }(x,u,v) & \text{in}\;\;\Omega , \\ 
-\Delta _{p_{2}}{v}=g_{\mu }(x,u,v) & \text{in}\;\;\Omega , \\ 
|\nabla u|^{p_{1}-2}\frac{\partial u}{\partial \eta }=|\nabla v|^{p_{2}-2}%
\frac{\partial v}{\partial \eta }=0 & \text{on}\;\; \partial \Omega ,%
\end{array}
\right.  \label{P*1}
\end{equation}
where, for every $\mu >0$, $(u,v)\in W^{1,p_{1}}(\Omega )\times
W^{1,p_{2}}(\Omega )$, 
\begin{equation*}
f_{\mu }(\cdot ,u,v):=f(\cdot ,T_{1}(u),T_{2}(v))+\lambda h_{1}-\mu
\chi_{1}(\cdot ,u),
\end{equation*}
\begin{equation*}
g_{\mu }(\cdot ,u,v):=g(\cdot ,T_{1}(u),T_{2}(v))+\lambda h_{2}-\mu
\chi_{2}(\cdot ,v).
\end{equation*}
Evidently, 
\begin{equation*}
\left\{ 
\begin{array}{l}
f_{\mu }(\cdot ,u,v)=f(\cdot ,u,v)+\lambda h_{1}, \\ 
g_{\mu }(\cdot ,u,v)=g(\cdot ,u,v)+\lambda h_{2},%
\end{array}
\right.  \label{21}
\end{equation*}
once $(u,v)\in W^{1,p_{1}}(\Omega )\times W^{1,p_{2}}(\Omega )$ satisfies %
\eqref{19}.

Let $\mathcal{E}$ be the space $W^{1,p_{1}}(\Omega )\times
W^{1,p_{2}}(\Omega )$ equipped with the norm 
\begin{equation*}
\Vert (u,v)\Vert _{\mathcal{E}}:=\Vert u\Vert _{1,p_{1}}+\Vert
v\Vert_{1,p_{2}},\quad (u,v)\in \mathcal{E},
\end{equation*}
and let $\mathcal{B}_{\mu }:\mathcal{E}\rightarrow \mathcal{E}^{\prime }$ be
defined by 
\begin{equation*}
\begin{split}
\left\langle\mathcal{B}_{\mu}(u,v),(\varphi,\psi )\right\rangle:= &
\int_{\Omega }(|\nabla u|^{p_{1}-2}\nabla u\nabla\varphi +|\nabla
v|^{p_{2}-2}\nabla v\nabla \psi )\,\mathrm{d}x \\
& -\int_{\Omega}f_{\mu }(\cdot,u,v)\varphi \,\mathrm{d}x-\int_{\Omega
}g_{\mu}(\cdot,u,v)\psi \,\mathrm{d}x
\end{split}%
\end{equation*}
for all $(u,v),(\varphi ,\psi )\in \mathcal{E}$. We shall verify that $%
\mathcal{B}_{\mu }$ fulfills the assumptions of Theorem \ref{brezis}
provided $\mu $ is large enough. To this end, observe at first that %
\eqref{54} entails 
\begin{equation}
\int_{\Omega }|h_{1}\varphi |\,\mathrm{d}x\leq C_{3}\Vert \varphi \Vert
_{1,p_{1}}\,,\quad \int_{\Omega }|h_{2}\psi |\,\mathrm{d}x\leq C_{3}^{\prime
}\Vert \psi \Vert _{1,p_{2}}  \label{52}
\end{equation}
because $-1<\gamma _{i}<0$.  \vskip3pt 1) $\mathcal{B}_{\mu }$ \textit{is
continuous.} \vskip3pt \noindent Suppose $(u_{n},v_{n})\rightarrow (u,v)$ in 
$\mathcal{E}$. Pick any $(\varphi ,\psi )\in \mathcal{E}$ such that $%
\Vert(\varphi ,\psi )\Vert _{\mathcal{E}}\leq 1$. If $p_{1},p_{2}\geq 2$
then, through \cite[Lemma 5.3]{GM} besides H\"{o}lder's inequality, one
easily obtains 
\begin{equation}
\begin{split}
& \int_{\Omega }\left\vert \left\langle |\nabla u_{n}|^{p_{1}-2}\nabla
u_{n}-|\nabla u|^{p_{1}-2}\nabla u,\nabla \varphi \right\rangle \right\vert 
\mathrm{d}x \\
& +\int_{\Omega }\left\vert \left\langle |\nabla v_{n}|^{p_{2}-2}\nabla
v_{n}-|\nabla v|^{p_{2}-2}\nabla v,\nabla \psi \right\rangle \right\vert 
\mathrm{d}x \\
& \leq c_{p_{1}}\left\Vert \nabla u_{n}+\nabla u\right\Vert
_{p_{1}}^{p_{1}^{\prime }(p_{1}-2)}\left\Vert u_{n}-u\right\Vert
_{1,p_{1}}^{p_{1}^{\prime }} \\
& +c_{p_{2}}\left\Vert \nabla v_{n}+\nabla v\right\Vert
_{p_{2}}^{p_{2}^{\prime }(p_{2}-2)}\left\Vert v_{n}-v\right\Vert
_{1,p_{2}}^{p_{2}^{\prime }}.
\end{split}
\label{38}
\end{equation}%
The case $1<p_{1},p_{2}\leq 2$ carries over via \cite[Lemma 5.4]{GM}, with
the right-hand side of \eqref{38} replaced by 
\begin{equation*}
c_{p_{1}}^{\prime }\left\Vert u_{n}-u\right\Vert
_{1,p_{1}}^{p_{1}-1}+c_{p_{2}}^{\prime }\left\Vert v_{n}-v\right\Vert
_{1,p_{2}}^{p_{2}-1},
\end{equation*}%
while the remaining situations are analogous. A simple argument based on the
Dominated Convergence Theorem, besides the continuity of maps \eqref{NF}, %
\eqref{NG}, and 
\begin{equation}
w\in W^{1,p_{i}}(\Omega )\mapsto \chi _{i}(\cdot ,w)\in L^{p_{i}^{\prime
}}(\Omega )\hookrightarrow W^{-1,p_{i}}(\Omega ),  \label{gammai}
\end{equation}%
shows that 
\begin{equation}
\lim_{n\rightarrow +\infty }\int_{\Omega }|(f_{\mu }(\cdot
,u_{n},v_{n})-f_{\mu }(\cdot ,u,v))\varphi |\mathrm{d}x=0  \label{40}
\end{equation}%
as well as 
\begin{equation}
\lim_{n\rightarrow +\infty }\int_{\Omega }|(g_{\mu }(\cdot
,u_{n},v_{n})-g_{\mu }(\cdot ,u,v))\psi |\mathrm{d}x=0.  \label{41}
\end{equation}%
Finally, since 
\begin{equation*}
\begin{split}
& \left\vert \langle \mathcal{B}_{\mu }(u_{n},v_{n})-\mathcal{B}%
_{\mu}(u,v),(\varphi ,\psi )\rangle \right\vert \\
& \leq \int_{\Omega }\left\vert \langle |\nabla u_{n}|^{p_{1}-2}\nabla
u_{n}-|\nabla u|^{p_{1}-2}\nabla u,\nabla \varphi \rangle \right\vert\mathrm{%
d}x \\
& +\int_{\Omega }\left\vert \left\langle |\nabla v_{n}|^{p_{2}-2}\nabla
v_{n}-|\nabla v|^{p_{2}-2}\nabla v,\nabla \psi \right\rangle \right\vert 
\mathrm{d}x \\
& +\int_{\Omega }|f_{\mu }(\cdot ,u_{n},v_{n})-f_{\mu }(\cdot ,u,v)||\varphi|%
\mathrm{d}x \\
& +\int_{\Omega }|g_{\mu }(\cdot ,u_{n},v_{n})-g_{\mu }(\cdot ,u,v)||\psi |%
\mathrm{d}x
\end{split}%
\end{equation*}
for all $n\in \mathbb{N}$, \eqref{38}--\eqref{41} easily produce $\Vert 
\mathcal{B}_{\mu }(u_{n},v_{n})-\mathcal{B}_{\mu }(u,v)\Vert _{\mathcal{E}%
^{\prime }}\rightarrow 0$. \vskip3pt 2) $\mathcal{B}_{\mu }$ \textit{is
bounded.} \vskip3pt \noindent It immediately follows from \eqref{52} and the
boundedness of maps \eqref{NF}, \eqref{NG}, \eqref{gammai}. \vskip3pt 3) $%
\mathcal{B}_{\mu }$ \textit{is coercive.} \vskip3pt \noindent Using %
\eqref{52} with $\varphi :=u$ and $\psi :=v$ yields 
\begin{equation*}
\int_{\Omega }|h_{1}u|\,\mathrm{d}x\leq C_{3}\Vert u\Vert _{1,p_{1}}\,,\quad
\int_{\Omega }|h_{2}v|\,\mathrm{d}x\leq C_{3}^{\prime }\Vert v\Vert
_{1,p_{2}}.
\end{equation*}
Hence, by $(\mathrm{a}_{1})$, 
\begin{equation}
\int_{\Omega }|f_{\mu }(\cdot ,u,v)u|\,\mathrm{d}x\leq M_{\rho }C_{4}\Vert
u\Vert _{p_{1}}+\lambda C_{3}\Vert u\Vert _{1,p_{1}},  \label{new2}
\end{equation}%
\begin{equation}
\int_{\Omega }|g_{\mu }(\cdot ,u,v)v|\,\mathrm{d}x\leq M_{\rho
}C_{4}^{\prime }\Vert v\Vert _{p_{2}}+\lambda C_{3}^{\prime }\Vert v\Vert
_{1,p_{2}}\,.  \label{new3}
\end{equation}%
Via \eqref{new2}--\eqref{new3} and \eqref{35}--\eqref{36} we thus arrive at 
\begin{equation*}
\begin{array}{l}
\left\langle \mathcal{B}_{\mu }(u,v),(u,v)\right\rangle \geq \Vert \nabla
u\Vert _{p_{1}}^{p_{1}}+\Vert \nabla v\Vert _{p_{2}}^{p_{2}}+\mu C_{1}^{\ast
}(\Vert u\Vert _{p_{1}}^{p_{1}}+\Vert v\Vert _{p_{2}}^{p_{2}}) \\ 
\phantom{} \\ 
-M_{\rho }C_{4}^{\ast }(\Vert u\Vert _{p_{1}}+\Vert v\Vert _{p_{2}})-\lambda
C_{3}^{\ast }(\Vert u\Vert _{1,p_{1}}+\Vert v\Vert _{1,p_{2}})-\mu
(C_{2}+C_{2}^{\prime }),%
\end{array}%
\end{equation*}%
where $C_{1}^{\ast }:=\min \{C_{1},C_{1}^{\prime }\}$, $C_{3}^{\ast }:=\max
\{C_{3},C_{3}^{\prime }\}$, $C_{4}^{\ast }:=\max \{C_{4},C_{4}^{\prime }\}$.
This inequality forces 
\begin{equation*}
\lim_{n\rightarrow +\infty }\frac{\langle \mathcal{B}_{\mu
}(u_{n},v_{n}),(u_{n},v_{n})\rangle }{\Vert (u_{n},v_{n})\Vert _{\mathcal{E}}%
}=+\infty ,
\end{equation*}%
as desired. \vskip3pt 4) $\mathcal{B}_{\mu }$ \textit{is pseudo-monotone.} %
\vskip3pt \noindent Suppose $(u_{n},v_{n})\rightharpoonup (u,v)$ in $%
\mathcal{E}$, 
\begin{equation}
\limsup_{n\rightarrow +\infty }\langle \mathcal{B}_{%
\mu}(u_{n},v_{n}),(u_{n},v_{n})-(u,v)\rangle \leq 0,  \label{B1}
\end{equation}
and, without loss of generality, 
\begin{equation}  \label{20}
(u_n,v_n)\in [\underline{u},\overline{u}]\times [\underline{v},\overline{v}%
]\quad\forall\, n\in\mathbb{N}.
\end{equation}
Since the maps \eqref{gammai} are completely continuous, exploiting $(%
\mathrm{a}_{1})$, \eqref{20}, \eqref{54} (recall that $-1<\gamma _{i}<0$),
and the Dominated Convergence Theorem, one has 
\begin{equation*}
\begin{split}
\lim_{n\rightarrow +\infty }\int_{\Omega }f_{\mu
}(\cdot,u_{n},v_{n})(u_{n}-u)\,\mathrm{d}x& =0, \\
\lim_{n\rightarrow +\infty }\int_{\Omega }g_{\mu
}(\cdot,u_{n},v_{n})(v_{n}-v)\,\mathrm{d}x& =0,
\end{split}%
\end{equation*}
which, when combined with \eqref{B1}, lead to 
\begin{equation}
\limsup_{n\rightarrow +\infty }[\left\langle
A_{p_{1}}(u_{n}),u_{n}-u\right\rangle +\left\langle
A_{p_{2}}(v_{n}),v_{n}-v\right\rangle ]\leq 0.  \label{new}
\end{equation}
Through standard results we achieve 
\begin{equation}
\lim_{n\rightarrow +\infty }\left\langle A_{p_{1}}(u),u_{n}-u\right\rangle
=\lim_{n\rightarrow +\infty }\left\langle A_{p_{2}}(v),v_{n}-v\right\rangle
=0,  \label{new1}
\end{equation}
so that \eqref{new} becomes 
\begin{equation*}
\limsup_{n\rightarrow +\infty }[\left\langle
A_{p_{1}}(u_{n})-A_{p_{1}}(u),u_{n}-u\right\rangle +\left\langle
A_{p_{2}}(v_{n})-A_{p_{2}}(v),v_{n}-v\right\rangle ]\leq 0.
\end{equation*}
By monotonicity, it actually means 
\begin{equation*}
\lim_{n\rightarrow +\infty }\left\langle
A_{p_{1}}(u_{n})-A_{p_{1}}(u),u_{n}-u\right\rangle =\lim_{n\rightarrow
+\infty }\left\langle A_{p_{2}}(v_{n})-A_{p_{2}}(v),v_{n}-v\right\rangle =0
\end{equation*}
Now, use \eqref{new1} and recall that $A_{p_{i}}$ is of type $(\mathrm{S}%
)_{+}$ to get $(u_{n},v_{n})\rightarrow (u,v)$ in $\mathcal{E}$, whence 
\begin{equation*}
\lim_{n\rightarrow +\infty }\langle \mathcal{B}_{%
\mu}(u_{n},v_{n}),(u_{n},v_{n})-(\varphi ,\psi )\rangle =\langle \mathcal{B}%
_{\mu }(u,v),(u,v)-(\varphi ,\psi )\rangle \;\;\forall \,(\varphi ,\psi )\in 
\mathcal{E},
\end{equation*}
because $\mathcal{B}_{\mu }$ is continuous. \vskip3pt At this point, Theorem %
\ref{brezis} can be applied. Therefore, there exists $(u,v)\in \mathcal{E}$
fulfilling 
\begin{equation}
\left\langle \mathcal{B}_{\mu }(u,v),(\varphi ,\psi )\right\rangle
=0,\;\;(\varphi ,\psi )\in \mathcal{E}.  \label{ws}
\end{equation}
Moreover, due to \cite[Theorem 3]{CF}, one has 
\begin{equation*}
|\nabla u|^{p_{1}-2}\frac{\partial u}{\partial \eta }=|\nabla v|^{p_{2}-2} 
\frac{\partial v}{\partial \eta }=0\;\;\text{on }\partial \Omega .
\end{equation*}
Thus, $(u,v)$ is a weak solution of \eqref{P*1}. Let us next verify that
inequalities \eqref{19} hold true. Writing \eqref{ws} for $(\varphi ,\psi
):=((u-\overline{u})_{+},0)$ and taking \eqref{c3} into account, we infer 
\begin{equation*}
\begin{split}
& \int_{\Omega }|\nabla u|^{p_{1}-2}\nabla u\,\nabla (u-\overline{u})_{+}\,%
\mathrm{d}x =\int_{\Omega }f_{\mu }(\cdot ,u,v)(u-\overline{u})_{+}\,\mathrm{%
d}x \\
& =\int_{\Omega}\hskip-3pt f(\cdot ,T_{1}u,T_{2}v)(u-\overline{u})_{+}%
\mathrm{d}x +\lambda\hskip-1pt \int_{\Omega }\hskip-3pt h_{1}(u-\overline{u}%
)_{+}\mathrm{d}x -\mu\hskip-1pt\int_{\Omega }\hskip-3pt\chi _{1}(\cdot ,u)(u-%
\overline{u})_{+}\mathrm{d}x \\
& =\int_{\Omega }f(\cdot ,\overline{u},T_{2}v)(u-\overline{u})_{+}\mathrm{d}%
x +\lambda \int_{\Omega }h_{1}(u-\overline{u})_{+}\mathrm{d}%
x-\mu\int_{\Omega }(u-\overline{u})_{+}^{p_{1}}\mathrm{d}x \\
& \leq \int_{\Omega }|\nabla \overline{u}|^{p_{1}-2}\nabla \overline{u}%
\,\nabla (u-\overline{u})_{+}\mathrm{d}x -\mu \int_{\Omega }(u-\overline{u}%
)_{+}^{p_{1}}\mathrm{d}x,
\end{split}%
\end{equation*}
namely 
\begin{equation*}
\int_{\Omega }\left( |\nabla u|^{p_{1}-2}\nabla u-|\nabla \overline{u}%
|^{p_{1}-2}\nabla \overline{u}\right) \nabla (u-\overline{u})_{+}\,\mathrm{d}%
x\leq -\mu \int_{\Omega }(u-\overline{u})_{+}^{p_{1}}\mathrm{d}x\leq 0.
\end{equation*}
The monotonicity of $A_{p_{1}}$ directly yields $u\leq \overline{u}$. To see
that $\underline{u}\leq u$, pick $(\varphi ,\psi ):=((\underline{u}-u)_{+},0)
$ and employ \eqref{c2}. A quite similar reasoning then gives $\underline{v}%
\leq v\leq \overline{v}$. Consequently, $(u,v)$ is a solution of \eqref{p}
within $[\underline{u},\overline{u}]\times \lbrack \underline{v},\overline{v}%
]$.

Finally, let $\lambda =0$. Arguing exactly as in \cite[Remark 8]{MMT} we
obtain here $(u,v)\in C^{1,\tau }(\overline{\Omega })\times C^{1,\tau }(%
\overline{\Omega })$ for some $\tau \in ]0,1[$ and $\frac{\partial u}{%
\partial \eta }=\frac{\partial v}{\partial \eta }=0$ on $\partial \Omega $,
which completes the proof.
\end{proof}

\begin{remark}
The conclusion of Theorem \ref{T2} remains true if we replace Neumann
boundary conditions with Dirichlet ones.
\end{remark}

\begin{remark}
Hypothesis $(\mathrm{a}_2)$ will be summarized saying that $(\underline{u},%
\underline{v})$ and $(\overline{u},\overline{v})$ represent a sub-solution
and a super-solution pair, respectively, for \eqref{p}.
\end{remark}

\section{Existence of solutions}

\label{S4}

Our first goal is to construct sub- and super-solution pairs of \eqref{p}.
With this aim, consider the homogeneous Dirichlet problem 
\begin{equation}  \label{5bis}
\left\{ 
\begin{array}{ll}
-\Delta _{p_{i}}u=1 & \text{ in }\Omega , \\ 
u=0 & \text{ on }\partial \Omega ,%
\end{array}
\right.
\end{equation}
$i=1,2$, which admits a unique solution $z_{i}\in C^{1,\tau}_0(\overline{%
\Omega})$.

\begin{lemma}
There exist $\hat L,l,L>0$ such that 
\begin{equation}  \label{12}
\Vert \nabla z_{i}\Vert _{\infty }\leq \hat{L},
\end{equation}
\begin{equation}  \label{12*}
ld\leq z_{i}\leq Ld\;\;\text{in}\;\;\Omega,\;\;\text{and}\;\;\frac{\partial
z_{i}}{\partial \eta }<0\;\;\text{on}\;\;\partial \Omega ,
\end{equation}
\end{lemma}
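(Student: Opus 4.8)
The plan is to extract all three conclusions from two ingredients already available: the global $C^{1,\tau}$-regularity of $z_i$ (recorded above, i.e. $z_i\in C^{1,\tau}_0(\overline\Omega)$) and the strong maximum principle together with the Hopf-type boundary point lemma of V\'azquez for the $p_i$-Laplacian. The estimate \eqref{12} is immediate: $\nabla z_i$ is continuous on the compact set $\overline\Omega$, hence bounded, so one may put $\hat L:=\max_{i=1,2}\Vert\nabla z_i\Vert_\infty$. For the upper bound in \eqref{12*}, note that $z_i=0$ on $\partial\Omega$ places $z_i$ in $C^{1,\tau}_0(\overline\Omega)$, so Lemma \ref{L1} applies and yields $d^{-1}z_i\in C^{0,\tau/(\tau+1)}(\overline\Omega)$; in particular $d^{-1}z_i$ is bounded, and one takes $L:=\max_{i=1,2}\Vert d^{-1}z_i\Vert_\infty$.

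For positivity and the sign of the normal derivative, observe that $-\Delta_{p_i}z_i=1\geq 0$ with $z_i=0$ on $\partial\Omega$, so the weak maximum principle gives $z_i\geq 0$ in $\Omega$; since the right-hand side $1$ is strictly positive, the strong maximum principle forces $z_i>0$ in $\Omega$, while the boundary point lemma yields $\frac{\partial z_i}{\partial\eta}<0$ on $\partial\Omega$. This is the last assertion in \eqref{12*}.

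It remains to produce the lower bound $z_i\geq ld$. By the previous paragraph and the compactness of $\partial\Omega$ there is $m_i>0$ with $-\nabla z_i(\zeta)\cdot\eta(\zeta)\geq 2m_i$ for every $\zeta\in\partial\Omega$. For $x\in\Omega_\delta$ we invoke \eqref{propPi}: the segment $]\Pi(x),x]$ lies in $\Omega$, one has $x-\Pi(x)=-d(x)\eta(\Pi(x))$ and $z_i(\Pi(x))=0$, so the Mean Value Theorem gives $z_i(x)=-d(x)\,\nabla z_i(\hat x)\cdot\eta(\Pi(x))$ for some $\hat x\in\,]\Pi(x),x[$ with $|\hat x-\Pi(x)|\leq d(x)<\delta$. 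Shrinking $\delta$ so that $|\nabla z_i(x')-\nabla z_i(x'')|\leq m_i$ whenever $|x'-x''|<\delta$ (possible by uniform continuity of $\nabla z_i$ on $\overline\Omega$, for both $i$), we obtain $-\nabla z_i(\hat x)\cdot\eta(\Pi(x))\geq 2m_i-m_i=m_i$, hence $z_i\geq m_i d$ on $\Omega_\delta$. On the compact set $\overline\Omega\setminus\Omega_\delta$, which consists of interior points, $z_i$ is continuous and positive, so $z_i\geq\mu_i>0$ there, while $d\leq\mathrm{diam}(\Omega)$; thus $z_i\geq(\mu_i/\mathrm{diam}(\Omega))\,d$ on that set. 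Taking $l:=\min_{i=1,2}\{m_i,\mu_i/\mathrm{diam}(\Omega)\}$ completes the proof.

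The only genuinely nontrivial input is the Hopf-type fact $\frac{\partial z_i}{\partial\eta}<0$ on $\partial\Omega$ for the degenerate operator $\Delta_{p_i}$; everything else is elementary once Lemma \ref{L1}, the $C^{1,\tau}$-regularity of $z_i$, and \eqref{propPi} are in hand. In effect, the positivity and the strict sign of the normal derivative say that $z_i\in\mathrm{int}(C_+)$, and the two-sided bound $ld\leq z_i\leq Ld$ is precisely the familiar description of that interior; the delicate step is converting the strict sign of the normal derivative into a uniform linear lower bound near $\partial\Omega$, which is handled by the projection $\Pi$ and the Mean Value Theorem as above.
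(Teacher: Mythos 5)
Your proof is correct, but it departs from the paper's route in two places worth noting. First, for \eqref{12} you observe it is immediate from $z_i\in C^{1,\tau}(\overline\Omega)$ and compactness, whereas the paper cites Theorem~3.1 of \cite{CM} for a global gradient bound; since the regularity $z_i\in C^{1,\tau}_0(\overline\Omega)$ has already been recorded, your elementary observation is perfectly adequate. Second, and more substantively, for the upper bound $z_i\leq Ld$ you invoke Lemma~\ref{L1} to get $d^{-1}z_i$ bounded, while the paper argues directly: by the Mean Value Theorem and \eqref{propPi}, $|z_i(x)|=|z_i(x)-z_i(\Pi(x))|\leq\hat L\,d(x)$ for $x\in\Omega_\delta$, and then takes $L:=\max\{\hat L,\max_{\Omega\setminus\Omega_\delta}z_i/d\}$. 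Both routes work; the paper's is self-contained and uses only the already-established gradient bound, whereas yours leverages Lemma~\ref{L1} (which the paper reserves for the later proof of Lemma~\ref{L3}) and is arguably cleaner. Finally, the paper dispatches $ld\leq z_i$ and $\partial z_i/\partial\eta<0$ with a one-line appeal to the Strong Maximum Principle (implicitly, the characterization of $\mathrm{int}(C_+)$); your explicit derivation via the Hopf boundary lemma, uniform continuity of $\nabla z_i$, and the projection $\Pi$ usefully unpacks what the paper leaves as standard. One small point of care: $\overline\Omega\setminus\Omega_\delta$ technically contains $\partial\Omega$; you presumably mean the compact set $\{x\in\overline\Omega:d(x)\geq\delta\}$, which is what the argument requires and what makes the set consist of interior points.
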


\begin{proof}
Theorem 3.1 of \cite{CM} ensures that \eqref{12} holds. The Strong Maximum
Principle entails $ld\leq z_i$, for appropriate $l>0$, as well as $\frac{%
\partial z_{i}}{\partial \eta }\lfloor_{\partial\Omega}<0$. Since $%
\partial\Omega$ is smooth, we can find $\delta>0$ and $\Pi\in
C^1(\Omega_\delta,\partial\Omega)$ satisfying \eqref{propPi}. Thus, the Mean
Value Theorem, when combined with \eqref{12}, lead to 
\begin{equation}  \label{MVT1}
|z_i(x)|=|z_i(x)-z_i(\Pi(x))|\leq\hat L |x-\Pi(x)|=\hat L d(x)\quad\forall\,
x\in\Omega_\delta\, .
\end{equation}
Define 
\begin{equation*}
L:=\max\left\{\hat L,\, \max_{\Omega\setminus\Omega_\delta}\frac{z_i}{d}\,
,\, i=1,2\right\}.
\end{equation*}
On account of \eqref{MVT1}, one evidently has $z_i\leq Ld$.
\end{proof}

Now, given $\delta >0$, denote by $z_{i,\delta }\in C^{1,\tau}_0(\overline{%
\Omega})$ the solution of the Dirichlet problem 
\begin{equation}
-\Delta _{p_{i}}u=\left\{ 
\begin{array}{ll}
1 & \text{if }x\in \Omega \backslash \overline{\Omega }_{\delta }, \\ 
-\lambda ^{\theta p_{i}}d(x)^{\gamma _{i}} & \text{otherwise},%
\end{array}
\right. \quad u=0\text{ on }\partial \Omega,  \label{1}
\end{equation}
where $i=1,2$, 
\begin{equation}
\theta >1+p_{i}^{\prime }>1+\frac{1}{p_{i}-1},  \label{5}
\end{equation}
while $\gamma_i$ is as in \eqref{16} for $\lambda,\theta>0$ big enough.
Existence and uniqueness directly stem from Minty-Browder's Theorem, because 
$-1<\gamma_i<0$ forces $d^{\gamma_i}\in W^{-1,p_i^{\prime }}(\Omega)$; see %
\eqref{54}.

\begin{lemma}
\label{L3} If $\delta>0$ is small enough then

\begin{itemize}
\item[$(\mathrm{j}_1)$] $\frac{\partial z_{i,\delta}}{\partial\eta}<\frac{1}{%
2}\frac{\partial z_i}{\partial\eta}<0$ on $\partial \Omega$, and

\item[$(\mathrm{j}_2)$] $z_{i,\delta}\geq\frac{1}{2}\, z_{i}$ in $\Omega$.
\end{itemize}
\end{lemma}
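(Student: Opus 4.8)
The plan is to compare $z_{i,\delta}$ with $\frac12 z_i$ by exploiting the fact that the $p_i$-Laplacian is monotone and homogeneous, and that $d^{\gamma_i}$ becomes small (after scaling) for $\lambda$ large. Write $w:=z_{i,\delta}-\tfrac12 z_i$. Using homogeneity of $\Delta_{p_i}$ one has $-\Delta_{p_i}(\tfrac12 z_i)=\bigl(\tfrac12\bigr)^{p_i-1}$ in $\Omega$, while $-\Delta_{p_i}z_{i,\delta}=1$ on $\Omega\setminus\overline\Omega_\delta$ and $-\Delta_{p_i}z_{i,\delta}=-\lambda^{\theta p_i}d^{\gamma_i}$ on $\Omega_\delta$. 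On the outer region $\Omega\setminus\overline\Omega_\delta$ we get $-\Delta_{p_i}z_{i,\delta}=1>\bigl(\tfrac12\bigr)^{p_i-1}=-\Delta_{p_i}(\tfrac12 z_i)$, so by the weak comparison principle for the $p_i$-Laplacian, $z_{i,\delta}\ge \tfrac12 z_i$ there (the boundary of that region consists of $\partial\Omega$, where both vanish, and $\{d=\delta\}$, where we will have already established the inequality from the inner region — so the argument must be organised carefully, see below).

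For $(\mathrm{j}_1)$, I would localise near $\partial\Omega$. Fix a point $x_0\in\partial\Omega$ and work in the tubular neighbourhood $\Omega_\delta$ using the function $\Pi$ and $d$ from \eqref{propPi}. The idea is to build an explicit local supersolution (or subsolution) for the problem solved by $z_{i,\delta}$ that behaves, near $\partial\Omega$, like a multiple of $d$ with a controlled normal derivative. Concretely, on $\Omega_\delta$ the right-hand side of \eqref{1} is $-\lambda^{\theta p_i}d^{\gamma_i}$; with $\gamma_i=\lambda^{-\theta p_i}(p_i-1)-1$ as in \eqref{16}, one checks that a one-dimensional comparison function of the form $a\bigl(d-\delta\bigr)$ plus a correction of order $d^{\gamma_i+2}=d^{\,\lambda^{-\theta p_i}(p_i-1)+1}$ nearly solves the radial ODE $-(|\!\cdot\!|^{p_i-2}\,\cdot\,)'=\mp\lambda^{\theta p_i}d^{\gamma_i}$; as $\lambda\to+\infty$ the exponent $\gamma_i+2\to 1^+$, so this correction is higher order and does not affect the leading normal derivative, which can be made as close to $0$ as we wish (in particular strictly between $\tfrac12\partial_\eta z_i$ and $0$, since $\partial_\eta z_i<0$ is bounded away from $0$ by \eqref{12*}). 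Applying the comparison principle on $\Omega_\delta$ — with the inequality on the inner boundary $\{d=\delta\}$ coming from the fact that $z_{i,\delta}$ stays bounded below there — pins down the sign and size of $\partial_\eta z_{i,\delta}$ via Hopf-type estimates, yielding $(\mathrm{j}_1)$ once $\delta$ is small and $\lambda,\theta$ are large.

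Once $(\mathrm{j}_1)$ is in hand, $(\mathrm{j}_2)$ follows by a global comparison. On $\Omega_\delta$ we have $-\Delta_{p_i}z_{i,\delta}=-\lambda^{\theta p_i}d^{\gamma_i}<0<\bigl(\tfrac12\bigr)^{p_i-1}=-\Delta_{p_i}(\tfrac12 z_i)$, which goes the "wrong" way, so the naive pointwise comparison fails inside the tube; this is the main obstacle. I would circumvent it by using the boundary information: on $\partial\Omega$ both functions vanish and, by $(\mathrm{j}_1)$, $\partial_\eta z_{i,\delta}<\tfrac12\partial_\eta z_i<0$, so $z_{i,\delta}-\tfrac12 z_i$ has nonnegative inward normal derivative on $\partial\Omega$ and hence $z_{i,\delta}\ge\tfrac12 z_i$ in a one-sided neighbourhood of $\partial\Omega$; combined with the outer comparison $z_{i,\delta}\ge\tfrac12 z_i$ on $\Omega\setminus\overline\Omega_\delta$ (valid once we know the inequality on $\{d=\delta\}$, which the near-boundary estimate supplies after possibly shrinking the tube), we obtain $z_{i,\delta}\ge\tfrac12 z_i$ throughout $\Omega$. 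Alternatively, and perhaps more cleanly, one notes $-\Delta_{p_i}z_{i,\delta}\ge -\Delta_{p_i}(\tfrac12 z_i)$ holds on the set $\Omega\setminus\overline\Omega_\delta$ and that $z_{i,\delta}\ge 0=\tfrac12 z_i$ only on $\partial\Omega$, so the truly delicate point is controlling $z_{i,\delta}$ from below on $\Omega_\delta$; this is exactly what the explicit barrier built for $(\mathrm{j}_1)$ delivers, since that barrier is itself $\ge\tfrac12 z_i$ there by \eqref{12*} for $\delta$ small. The role of $\delta$ small and $\lambda,\theta$ large is precisely to make $\gamma_i+2$ close to $1$ so that the barrier's leading term dominates the singular correction uniformly on $\Omega_\delta$.
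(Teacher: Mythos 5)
Your proposal goes down a genuinely different road from the paper's. The paper's argument is a compactness argument in $\delta$: one tests the difference of the two equations against $z_i-z_{i,\delta}$ to get
\begin{equation*}
\int_{\Omega}\bigl(|\nabla z_{i}|^{p_{i}-2}\nabla z_{i}-|\nabla z_{i,\delta}|^{p_{i}-2}\nabla z_{i,\delta}\bigr)\nabla(z_{i}-z_{i,\delta})\,\mathrm{d}x\leq 2\hat M_{i}(1+\lambda^{\theta p_{i}})\int_{\Omega_{\delta}}d^{\gamma_{i}}\,\mathrm{d}x,
\end{equation*}
which tends to $0$ as $\delta\to 0^{+}$ because $-1<\gamma_i<0$; combined with a uniform $C^{1,\tau}$ bound and the compact embedding $C^{1,\tau}\hookrightarrow C^1$, this yields $z_{i,\delta}\to z_i$ in $C^1(\overline\Omega)$. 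Conclusion $(\mathrm{j}_1)$ then falls out immediately from uniform convergence of the normal derivatives, and $(\mathrm{j}_2)$ follows by converting the $C^1$-convergence into uniform convergence of $z_{i,\delta}/d$ via Lemma \ref{L1}, then using $z_i\geq ld$. Notice that Lemma \ref{L1} --- the statement that $u\mapsto d^{-1}u$ maps $C^{1,\tau}_0$ boundedly into $C^{0,\tau/(\tau+1)}$ --- is precisely the device that lets one descend from a $C^1$ estimate to a pointwise comparison of $z_{i,\delta}$ with $\tfrac12 z_i$; your sketch never invokes anything of the kind, which is telling, because without it the pointwise comparison near $\partial\Omega$ is exactly the hard point.

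Your barrier approach is a reasonable thing to try, and you correctly identify the obstruction: inside the tube the inequality $-\Delta_{p_i}z_{i,\delta}=-\lambda^{\theta p_i}d^{\gamma_i}<(\tfrac12)^{p_i-1}$ runs the wrong way. But as written the argument has concrete gaps. (i) The logic between the inner and outer regions is circular: the outer comparison on $\Omega\setminus\overline\Omega_\delta$ needs $z_{i,\delta}\geq\tfrac12 z_i$ on $\{d=\delta\}$, which you say "the near-boundary estimate supplies," while the near-boundary barrier needs a lower bound on $z_{i,\delta}$ at $\{d=\delta\}$ to compare against --- you never break this cycle. (ii) The proposed barrier $a(d-\delta)+c\,d^{\gamma_i+2}$ is negative near $\partial\Omega$ (it equals $-a\delta$ at $d=0$), so the claim in your "alternative" paragraph that this barrier is itself $\geq\tfrac12 z_i\geq0$ on the tube fails on its face; a barrier vanishing at $d=0$ would be needed, and then its slope at $\partial\Omega$ is determined by a matching condition at $\{d=\delta\}$ that feeds back the correction term with its very large coefficient $\sim\lambda^{\theta p_i p_i^{\prime}}$, so the assertion that the correction "does not affect the leading normal derivative" is unjustified. (iii) You repeatedly invoke "$\lambda,\theta$ large" to make $\gamma_i+2\to 1^{+}$, but the lemma fixes $\lambda,\theta$ and only shrinks $\delta$; the proof must quantify over $\delta$ with $\lambda,\theta$ frozen, which your sketch does not do cleanly. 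In short, the intuition is sound but the barrier would have to be constructed in full and the comparison principle applied on a region whose boundary data are actually known, which you have not done.
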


\begin{proof}
Let $\hat M_{i}>0$ fulfill 
\begin{equation}
\Vert z_{i}\Vert _{C^{1,\tau }(\overline{\Omega })}\leq\hat M_{i},\quad
\Vert z_{i,\delta }\Vert _{C^{1,\tau }(\overline{\Omega })}\leq\hat
M_{i}\,,\quad\delta >0.  \label{emme}
\end{equation}
Using \eqref{5bis} and \eqref{1} furnishes 
\begin{equation*}
-\Delta _{p_{i}}z_{i}(x)-(-\Delta _{p_{i}}z_{i,\delta }(x))=\left\{ 
\begin{array}{ll}
0 & \text{in }\Omega \backslash\overline{\Omega }_{\delta }, \\ 
1+\lambda ^{\theta p_{i}}d(x)^{\gamma _{i}} & \text{in }\Omega _{\delta }.%
\end{array}
\right.
\end{equation*}
Due to \eqref{1} again, besides \eqref{emme}, it easily implies 
\begin{equation*}
\begin{split}
& \int_{\Omega}(|\nabla z_{i}|^{p_{i}-2}\nabla z_{i}-|\nabla
z_{i,\delta}|^{p_{i}-2}\nabla z_{i,\delta })\nabla (z_{i}-z_{i,\delta })\,%
\mathrm{d}x \\
& \leq 2\hat M_{i}(1+\lambda ^{\theta p_{i}})\int_{\Omega _{\delta
}}d^{\gamma_{i}}\,\mathrm{d}x,
\end{split}%
\end{equation*}
whence, on account of \cite[Lemma A.0.5]{P}, 
\begin{equation*}
\lim_{\delta \rightarrow 0^{+}}\Vert \nabla z_{i,\delta }-\nabla z_{i}\Vert
_{p_{i}}=0.
\end{equation*}
Observe that $\int_{\Omega _{\delta }}d^{\gamma _{i}}\,\mathrm{d}x<+\infty $%
, as $-1<\gamma _{i}<0$ and so \cite[Lemma]{LM} applies. Since the embedding 
$C^{1,\tau }(\overline{\Omega })\subseteq C^{1}(\overline{\Omega })$ is
compact, up to subsequences, we thus have 
\begin{equation}
\lim_{\delta \rightarrow 0^{+}}\Vert z_{i,\delta }-z_{i}\Vert _{C^{1}(%
\overline{\Omega })}=0.  \label{subseq}
\end{equation}
From \eqref{12*} it follows $k_{i}:=-\max_{\partial \Omega }\frac{\partial
z_{i}}{\partial \eta }>0$ while, by \eqref{subseq}, 
\begin{equation*}
\lim_{\delta \rightarrow 0^{+}}\frac{\partial z_{i,\delta }}{\partial \eta }%
= \frac{\partial z_{i}}{\partial \eta }\;\;\mbox{uniformly in}%
\;\;\partial\Omega .
\end{equation*}
Hence, there exists $\delta _{0}>0$ such that 
\begin{equation*}  \label{firstc}
\frac{\partial z_{i,\delta }}{\partial \eta }<\frac{1}{2}\frac{\partial z_{i}%
}{\partial \eta }\leq -\frac{k_{i}}{2}<0\;\;\text{on}\;\;\partial \Omega
\end{equation*}
for all $\delta <\delta _{0}$. This shows conclusion $(\mathrm{j}_{1})$.

%
%
%
%
%
%
%
%
%
%
%
%
Thanks to Lemma \ref{L1} and \eqref{subseq} we get 
\begin{equation*}  \label{zoverd}
\lim_{\delta \rightarrow 0^{+}}\Big\Vert\frac{z_{i,\delta }}{d}-\frac{z_{i}}{%
d}\Big\Vert_{C^{0}(\overline{\Omega })}=0.
\end{equation*}
Bearing in mind \eqref{12*} one arrives at 
\begin{equation*}
\frac{z_{i,\delta}}{d}>\frac{z_i}{d}-\frac{l}{2}\geq\frac{l}{2}%
\quad\forall\,\delta\in\ ]0,\delta_1[
\end{equation*}
with suitable $\delta_1>0$. Consequently, 
\begin{equation*}
\frac{z_{i,\delta}}{d}>\frac{z_i}{d}-\frac{l}{2}\geq\frac{z_i}{d}-\frac{%
z_{i.\delta}}{d} ,
\end{equation*}
which immediately forces $(\mathrm{j}_{2})$. 
%
%
%
%
%
\end{proof}

Given $\delta ,\lambda >0$, define 
\begin{equation}  \label{2*}
\underline{u}:=\frac{1}{\lambda}\left(z_{1,\delta }-\frac{l\delta }{2}%
\right),\quad \underline{v}:=\frac{1}{\lambda}\left(z_{2,\delta }-\frac{%
l\delta }{2}\right),
\end{equation}
\begin{equation}  \label{32}
\overline{u}:=\lambda ^{p_{1}^{\prime }}\left( z_{1}^{\omega _{1}}-\left(%
\frac{L\delta}{\lambda^\theta}\right)^{\omega_{1}}\right),\quad \overline{v}%
:=\lambda^{p_{2}^{\prime }}\left( z_{2}^{\omega _{2}}-\left(\frac{L\delta}{%
\lambda^\theta}\right)^{\omega_{2}}\right),
\end{equation}
where $\theta $ satisfies \eqref{5} while 
\begin{equation}
\omega _{i}:=1+\frac{\gamma _{i}+1}{p_{i}-1}:=1+\lambda ^{-\theta p_{i}}.
\label{34}
\end{equation}
Via $(\mathrm{j}_{1})$ of Lemma \ref{L3} we obtain 
\begin{equation}  \label{14}
\frac{\partial \underline{u}}{\partial \eta }=\frac{1}{\lambda}\frac{%
\partial z_{1,\delta }}{\partial \eta }<0\quad\text{and}\quad\frac{\partial 
\underline{v}}{\partial \eta }=\frac{1}{\lambda}\frac{\partial z_{2,\delta }%
}{\partial \eta }<0\quad\text{on}\quad\partial \Omega .
\end{equation}
From \eqref{12}--\eqref{12*} it follows 
\begin{equation}  \label{15*}
\overline{u}\leq \lambda ^{p_{1}^{\prime }}(Ld)^{\omega _{1}},\quad 
\overline{v}\leq \lambda ^{p_{2}^{\prime }}(Ld)^{\omega _{2}},
\end{equation}
as well as 
\begin{equation*}
\Vert \nabla \overline{u}\Vert _{\infty }\leq \lambda ^{p_{1}^{\prime }}\hat{%
L}_{1},\quad \Vert \nabla \overline{v}\Vert _{\infty }\leq
\lambda^{p_{2}^{\prime }}\hat{L}_{2},  \label{15}
\end{equation*}
with $\hat{L}_{i}:=\omega _{i}(L|\Omega |)^{\omega _{i}-1}\hat{L},$ $i=1,2$.
Moreover, 
\begin{equation}
\begin{split}
\frac{\partial \overline{u}}{\partial \eta }=\lambda ^{p_{1}^{\prime }}\frac{%
\partial (z_{1}^{\omega _{1}})}{\partial \eta }=\lambda
^{p_{1}^{\prime}}\omega _{1}z_{1}^{\omega _{1}-1}\frac{\partial z_{1}}{%
\partial \eta }=0, \\
\frac{\partial \overline{v}}{\partial \eta }=\lambda ^{p_{2}^{\prime }}\frac{%
\partial (z_{2}^{\omega _{2}})}{\partial \eta }=\lambda
^{p_{2}^{\prime}}\omega _{2}z_{2}^{\omega _{2}-1}\frac{\partial z_{2}}{%
\partial \eta }=0
\end{split}
\label{24}
\end{equation}
on $\partial \Omega $, because $z_{i}$ solves \eqref{5bis} and $\omega
_{i}>1 $, $i=1,2$.

\begin{lemma}
\label{L3} Under \eqref{16}, with a large fixed $\lambda >0,$ one has both $%
\underline{u}\leq \overline{u}$ and $\underline{v}\leq \overline{v}$
provided $\theta >0$ is big enough.
\end{lemma}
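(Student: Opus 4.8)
The plan is to prove $\underline{u}\le\overline{u}$ in $\Omega$; the twin inequality $\underline{v}\le\overline{v}$ then follows word for word after swapping the index $1$ with $2$. Since $\underline{u}$ and $\overline{u}$ are continuous, it suffices to argue pointwise in $\Omega$.

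First I would eliminate the auxiliary function $z_{1,\delta}$ entering the definition \eqref{2*} of $\underline{u}$. The datum of \eqref{1} never exceeds $1$ --- it equals $1$ outside $\overline{\Omega}_{\delta}$ and is negative inside --- so the weak comparison principle for the Dirichlet $p_1$-Laplacian, applied to \eqref{1} and \eqref{5bis}, gives $z_{1,\delta}\le z_1$ in $\Omega$, whence $z_{1,\delta}\le z_1\le Ld$ thanks to \eqref{12*}. Hence $\underline{u}\le\frac1\lambda\big(z_1-\frac{l\delta}{2}\big)$, and --- taking \eqref{32} and $\omega_1>0$ into account --- the claim reduces to the one-variable inequality
\begin{equation*}
\phi(t):=\lambda^{p_1^{\prime}}\!\left(t^{\omega_1}-\Big(\tfrac{L\delta}{\lambda^{\theta}}\Big)^{\omega_1}\right)-\frac1\lambda\Big(t-\tfrac{l\delta}{2}\Big)\ \ge\ 0\qquad\text{for all }t>0 ,
\end{equation*}
because then, since $z_1(x)\ge ld(x)>0$ for $x\in\Omega$ by \eqref{12*},
\begin{equation*}
\underline{u}(x)\ \le\ \tfrac1\lambda\big(z_1(x)-\tfrac{l\delta}{2}\big)\ \le\ \lambda^{p_1^{\prime}}\big(z_1(x)^{\omega_1}-(\tfrac{L\delta}{\lambda^{\theta}})^{\omega_1}\big)\ =\ \overline{u}(x).
\end{equation*}

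To settle $\phi\ge 0$ I would use elementary calculus. One has $\phi'(t)=\lambda^{p_1^{\prime}}\omega_1 t^{\omega_1-1}-\lambda^{-1}$, which vanishes at the single point
\begin{equation*}
t_{\ast}=\big(\lambda^{1+p_1^{\prime}}\omega_1\big)^{-1/(\omega_1-1)}=\big(\lambda^{1+p_1^{\prime}}\omega_1\big)^{-\lambda^{\theta p_1}},
\end{equation*}
where $\tfrac1{\omega_1-1}=\lambda^{\theta p_1}$ by \eqref{34}; moreover $\phi$ is decreasing on $]0,t_{\ast}[$ and increasing on $]t_{\ast},+\infty[$, so $\phi(t)\ge\phi(t_{\ast})$ for every $t>0$. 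Using $t_{\ast}^{\omega_1}=t_{\ast}/(\lambda^{1+p_1^{\prime}}\omega_1)$ a short computation gives
\begin{equation*}
\phi(t_{\ast})=\frac1\lambda\left(\frac{l\delta}{2}-\frac{t_{\ast}(\omega_1-1)}{\omega_1}-\lambda^{1+p_1^{\prime}}\Big(\frac{L\delta}{\lambda^{\theta}}\Big)^{\omega_1}\right),
\end{equation*}
so everything comes down to showing that the last two terms in the bracket are dominated by $\tfrac{l\delta}{2}$.

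The hard part --- really the only nontrivial point --- is precisely this asymptotic bookkeeping. For the fixed (large) $\lambda$ one has $\lambda^{1+p_1^{\prime}}\omega_1>1$, hence $t_{\ast}\le 1$ and so $t_{\ast}(\omega_1-1)/\omega_1\le\omega_1-1=\lambda^{-\theta p_1}\to 0$ as $\theta\to+\infty$. Next, $\lambda^{1+p_1^{\prime}}\big(L\delta/\lambda^{\theta}\big)^{\omega_1}=(L\delta)^{\omega_1}\lambda^{1+p_1^{\prime}-\theta\omega_1}$; since $\omega_1>1$ and $\theta>1+p_1^{\prime}$ by \eqref{5} we get $1+p_1^{\prime}-\theta\omega_1\le 1+p_1^{\prime}-\theta<0$, so this term is at most $(L\delta)^{\omega_1}\lambda^{1+p_1^{\prime}-\theta}$, which also tends to $0$ as $\theta\to+\infty$ because $(L\delta)^{\omega_1}$ stays bounded. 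Consequently $\phi(t_{\ast})\ge\frac{l\delta}{4\lambda}>0$ once $\theta$ is large enough, which yields $\phi\ge 0$ and hence $\underline{u}\le\overline{u}$. The inequality $\underline{v}\le\overline{v}$ follows by the identical chain of estimates with $p_2,z_2,z_{2,\delta},\omega_2$ in place of $p_1,z_1,z_{1,\delta},\omega_1$. (Notice that it is the choice \eqref{5} of $\theta$, relative to $p_i^{\prime}$, that kills the loss $\lambda^{1+p_i^{\prime}}(L\delta/\lambda^{\theta})^{\omega_i}$ produced by the subtraction in \eqref{32}, while the smallness of $\omega_i-1$ is what makes $t_{\ast}(\omega_i-1)/\omega_i$ harmless.)
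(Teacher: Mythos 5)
Your argument is correct, and it takes a genuinely different route from the paper's. The paper works at the level of the differential inequality: it computes $-\Delta_{p_1}\overline{u}$ in closed form, bounds it from below separately in $\Omega\setminus\overline{\Omega}_\delta$ and in $\Omega_\delta$, matches those bounds against the explicit expression \eqref{50} for $-\Delta_{p_1}\underline{u}$, and then invokes Tolksdorf's weak comparison lemma (using $\underline{u}\le\overline{u}<0$ on $\partial\Omega$). You instead first dispose of $z_{1,\delta}$ by a comparison at the level of the auxiliary Dirichlet problems --- observing that the datum of \eqref{1} is everywhere $\le 1$, so $z_{1,\delta}\le z_1$ --- and then reduce the ordering $\underline{u}\le\overline{u}$ to the scalar inequality $\phi(z_1(x))\ge 0$, which you settle by locating the unique minimizer $t_\ast$ of the convex function $\phi$ and checking $\phi(t_\ast)\ge 0$ for $\theta$ large. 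Your proof is more elementary (no differentiation of $z_1^{\omega_1}$, no dichotomy $\Omega_\delta$ versus $\Omega\setminus\overline{\Omega}_\delta$); what the paper's route buys is that the intermediate estimates \eqref{47}--\eqref{49} for $-\Delta_{p_1}\overline{u}$ are reused verbatim later, in \eqref{422425bis}, when verifying the super-solution inequality of Theorem \ref{T1}, so the longer computation is amortized. Two small points worth making explicit if you keep your version: the bound $t_\ast\le 1$, which you use to dominate $t_\ast(\omega_1-1)/\omega_1$ by $\omega_1-1$, holds because $\lambda^{1+p_1^{\prime}}\omega_1>1$ for the fixed large $\lambda$; and the factor $(L\delta)^{\omega_1}$ is indeed bounded uniformly in $\theta$ since $\omega_1\to 1^+$ as $\theta\to\infty$, which you correctly note but which deserves a word since $\omega_1$ depends on $\theta$.
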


\begin{proof}
A direct computation gives 
\begin{equation}
\begin{split}
-\Delta _{p_{1}}\overline{u}& =-\Delta _{p_{1}}(\lambda
^{p_{1}^{\prime}}z_{1}^{\omega _{1}})= \lambda ^{p_{1}}\omega _{1}^{p_{1}-1}%
\left[z_{1}^{(\omega _{1}-1)(p_{1}-1)}\right. \\
& \phantom{PPPPP} \left. -\left( \omega _{1}-1\right) \left(
p_{1}-1\right)z_{1}^{(\omega _{1}-1)(p_{1}-1)-1}|\nabla z_{1}|^{p_{1}}\right]
\\
& =\lambda ^{p_{1}}\omega _{1}^{p_{1}-1}\left[ z_{1}-\left(
\omega_{1}-1\right) \left( p_{1}-1\right) |\nabla z_{1}|^{p_{1}}\right]
z_{1}^{(\omega _{1}-1)(p_{1}-1)-1}.
\end{split}
\label{47}
\end{equation}
Using \eqref{34}, \eqref{12}--\eqref{12*}, and \eqref{5} yields 
\begin{equation}
\begin{split}
& \lambda ^{p_{1}}\omega _{1}^{p_{1}-1}\left[ z_{1}-(\omega_{1}-1)(p_{1}-1)|%
\nabla z_{1}|^{p_{1}}\right] z_{1}^{(\omega_{1}-1)(p_{1}-1)-1} \\
& =\lambda ^{p_{1}}\omega _{1}^{p_{1}-1}\left[ z_{1}-\lambda ^{-\theta
p_{1}}(p_{1}-1)|\nabla z_{1}|^{p_{1}}\right] z_{1}^{(%
\omega_{1}-1)(p_{1}-1)-1} \\
& \geq \lambda ^{p_{1}}\omega _{1}^{p_{1}-1}\left[ ld-\lambda ^{-\theta
p_{1}}(p_{1}-1)\hat{L}^{p_{1}}\right] (Ld_*)^{(\omega _{1}-1)(p_{1}-1)-1} \\
& \geq \lambda ^{p_{1}}\left[ l\delta -\lambda ^{-\theta p_{1}}(p_{1}-1)\hat{%
L}^{p_{1}}\right] (Ld_*)^{(\omega _{1}-1)(p_{1}-1)-1}\geq \lambda
^{-(p_{1}-1)}
\end{split}
\label{48}
\end{equation}
in $\Omega \backslash\overline{\Omega }_{\delta }$ once $\lambda,\theta>0$
are sufficiently large. Here, $d_*:= \max_{\overline{\Omega}}d$. By %
\eqref{34}, \eqref{12}, \eqref{12*}, besides \eqref{16}, we next obtain 
\begin{equation}
\begin{split}
& \lambda ^{p_{1}}\omega _{1}^{p_{1}-1}\left[ z_{1}-(\omega_{1}-1)(p_{1}-1)|%
\nabla z_{1}|^{p_{1}}\right] z_{1}^{(\omega_{1}-1)(p_{1}-1)-1} \\
& =\lambda ^{p_{1}}\omega _{1}^{p_{1}-1}\left[ z_{1}-\lambda ^{-\theta
p_{1}}(p_{1}-1)|\nabla z_{1}|^{p_{1}}\right] z_{1}^{\gamma _{1}} \\
& \geq -\lambda ^{(1-\theta )p_{1}}\omega _{1}^{p_{1}-1}(p_{1}-1)|\nabla
z_{1}|^{p_{1}}z_{1}^{\gamma _{1}} \\
& \geq -\lambda ^{(1-\theta )p_{1}}2^{p_{1}-1}(p_{1}-1)\hat{L}^{p_{1}}
(ld)^{\gamma _{1}}\geq -\lambda ^{(\theta -1)p_{1}+1}d^{\gamma _{1}}
\end{split}
\label{49}
\end{equation}
in $\Omega _{\delta }$, because $\omega _{1}<2$. On the other hand, due to %
\eqref{1}, from \eqref{2*} it follows 
\begin{equation}  \label{50}
\begin{array}{l}
-\Delta _{p_{1}}\underline{u}(x) =\left\{ 
\begin{array}{ll}
\lambda ^{-(p_{1}-1)} & \text{in}\;\;\Omega \backslash \overline{\Omega }%
_{\delta }, \\ 
-\lambda ^{(\theta -1)p_{1}+1}d(x)^{\gamma _{1}} & \text{in}%
\;\;\Omega_{\delta }.%
\end{array}
\right.%
\end{array}%
\end{equation}
Now, gathering \eqref{47}--\eqref{48} and \eqref{49}--\eqref{50} together
one achieves 
\begin{equation*}
-\Delta _{p_{1}}\underline{u}\leq -\Delta _{p_{1}}\overline{u}\,.
\end{equation*}
Since \eqref{2*}--\eqref{32} and the choice of $\lambda $ entail, for any
sufficiently large $\theta$, 
\begin{equation*}
\underline{u}\leq \overline{u}<0\;\;\text{on}\;\;\partial \Omega ,
\end{equation*}
through \cite[Lemma 3.1]{T} we achieve $\underline{u}\leq \overline{u}$ in $%
\Omega $, as desired. A quite similar argument ensures that $\underline{v}%
\leq \overline{v}$.
\end{proof}

\begin{remark}
Carefully reading this proof reveals that the constant $\theta$ in \eqref{5}
can be precisely estimated.
\end{remark}

We will posit the hypotheses below.

\begin{itemize}
\item[$(\mathrm{h}_{1})$] There exist $\alpha _{i},\beta _{i},M_{i}>0$, $%
i=1,2$, such that 
\begin{equation}
q:=\alpha _{i}p_{1}^{\prime }+\beta _{i}p_{2}^{\prime }<1  \label{c}
\end{equation}
and, moreover, 
\begin{equation*}
\begin{array}{l}
|f(x,s,t)|\leq M_{1}(1+|s|^{\alpha _{1}})(1+|t|^{\beta _{1}}), \\ 
|g(x,s,t)|\leq M_{2}(1+|s|^{\alpha _{2}})(1+|t|^{\beta _{2}})%
\end{array}%
\end{equation*}
for all $(x,s,t)\in \Omega \times \mathbb{R}^{2}$.

\item[$(\mathrm{h}_2)$] With appropriate $m_i,\rho_i>0$, $i=1,2$, one has 
\begin{equation}  \label{ha}
\lim_{|s|\to 0}\inf \left\{ f(x,s,t):-\rho_1\leq t \right\}>-m_{1},
\end{equation}
\begin{equation}  \label{hb}
\lim_{|t|\to 0}\inf\left\{ g(x,s,t):-\rho_2\leq s\right\}>-m_{2}
\end{equation}
uniformly in $x\in \Omega$.
\end{itemize}

\begin{theorem}
\label{T1}  Let $\gamma_i$, $i=1,2$, be given by \eqref{16}, with a large
fixed $\lambda >0,$ and let $(\mathrm{h}_1)$--$(\mathrm{h}_2)$ be satisfied.
Then problem \eqref{p} admits a nodal solution $(u_{0},v_{0})\in
W_{b}^{1,p_{1}}(\Omega )\times W_{b}^{1,p_{2}}(\Omega )$ provided $\theta >0$
is big enough. Further, both $u_{0}(x)$ and $v_{0}(x)$ are negative once $%
d(x)\rightarrow 0$.
\end{theorem}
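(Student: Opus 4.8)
The plan is to produce the nodal solution by checking that the explicit pair $(\underline{u},\underline{v})$, $(\overline{u},\overline{v})$ of \eqref{2*}--\eqref{32} is a sub-super-solution pair for \eqref{p} and then invoking Theorem~\ref{T2}. Since $z_i,z_{i,\delta}\in C^{1,\tau}_0(\overline{\Omega})$ and $\omega_i>1$, the four functions lie in $C^1(\overline{\Omega})\subseteq W_b^{1,p_i}(\Omega)$; being bounded, hypothesis $(\mathrm{h}_1)$ yields $(\mathrm{a}_1)$, while the ordering $\underline{u}\le\overline{u}$, $\underline{v}\le\overline{v}$ is exactly Lemma~\ref{L3}. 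Everything therefore reduces to verifying the distributional inequalities \eqref{c2}--\eqref{c3} for a suitable choice of the large parameters $\lambda$, $\theta$ and of the small $\delta$; this is the technical heart.

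I would first reduce \eqref{c2}--\eqref{c3} to pointwise ones. Because $\underline{u}$ solves a rescaled, translated copy of \eqref{1} with $\frac{\partial\underline{u}}{\partial\eta}<0$ on $\partial\Omega$ (see \eqref{14}), and $\overline{u}=\lambda^{p_1'}z_1^{\omega_1}-\mathrm{const}$ has $\frac{\partial\overline{u}}{\partial\eta}=0$ on $\partial\Omega$ (see \eqref{24}), integrating by parts --- legitimate since $-1<\gamma_i<0$ places $-\Delta_{p_i}\underline{u}$, $-\Delta_{p_i}\overline{u}$ and $h_i$ in $L^1(\Omega)$, cf.~\eqref{54} --- turns \eqref{c2} into the a.e.\ inequality $-\Delta_{p_1}\underline{u}\le f(\cdot,\underline{u},v)+\lambda h_1$ and \eqref{c3} into $-\Delta_{p_1}\overline{u}\ge f(\cdot,\overline{u},v)+\lambda h_1$, for every $v\in[\underline{v},\overline{v}]$ (and symmetrically for the second equation, with $g$ and $h_2$). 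Two size facts then govern the argument: $\|\underline{u}\|_\infty$, $\|\underline{v}\|_\infty=O(\lambda^{-1})$, so for $\lambda$ large one has $\underline{u}\ge-\rho_2$, $\underline{v}\ge-\rho_1$ and $(\mathrm{h}_2)$ forces $f(\cdot,\underline{u},v)\ge-m_1$, $g(\cdot,u,\underline{v})\ge-m_2$ for all admissible $u,v$; whereas $\|\overline{u}\|_\infty=O(\lambda^{p_1'})$, $\|\overline{v}\|_\infty=O(\lambda^{p_2'})$, so $(\mathrm{h}_1)$ together with \eqref{c} gives $|f(\cdot,\overline{u},v)|,|g(\cdot,u,\overline{v})|=O(\lambda^{q})$ with $q<1$.

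Next come the pointwise checks. \emph{Sub-solution.} By \eqref{50}, $-\Delta_{p_1}\underline{u}$ equals $\lambda^{1-p_1}$ on $\Omega\setminus\overline{\Omega}_\delta$ and $-\lambda^{(\theta-1)p_1+1}d^{\gamma_1}$ on $\Omega_\delta$; on $\Omega\setminus\overline{\Omega}_\delta$ the right-hand side exceeds $-m_1+\lambda d^{\gamma_1}$, which dominates $\lambda^{1-p_1}$ for $\lambda$ large, and on $\Omega_\delta$ the inequality amounts to $(\lambda-\lambda^{(\theta-1)p_1+1})d^{\gamma_1}\le f(\cdot,\underline{u},v)$, true because $\lambda^{(\theta-1)p_1+1}\gg\lambda$ keeps the left-hand side below $-m_1$ once $\theta$ is large. \emph{Super-solution.} By \eqref{47}, $-\Delta_{p_1}\overline{u}=\lambda^{p_1}\omega_1^{p_1-1}z_1^{\gamma_1}\big[z_1-\lambda^{-\theta p_1}(p_1-1)|\nabla z_1|^{p_1}\big]$; on $\Omega\setminus\overline{\Omega}_\delta$ the bracket exceeds $\tfrac{1}{2}z_1$, so --- adapting the chain \eqref{48} --- $-\Delta_{p_1}\overline{u}\ge c\,\lambda^{p_1}d^{\gamma_1+1}$, which beats $f(\cdot,\overline{u},v)+\lambda h_1=O(\lambda^{q})+O(\lambda d^{\gamma_1})$ because $p_1>1>q$; on $\Omega_\delta$ the subtracted term in the bracket carries the tiny weight $\lambda^{-\theta p_1}$, so (the intermediate bound in \eqref{49}) $-\Delta_{p_1}\overline{u}\ge-\lambda^{(1-\theta)p_1}C\,d^{\gamma_1}$, whence $-\Delta_{p_1}\overline{u}+\lambda d^{\gamma_1}\ge(\lambda-\lambda^{(1-\theta)p_1}C)d^{\gamma_1}\ge\tfrac{\lambda}{2}d^{\gamma_1}\ge f(\cdot,\overline{u},v)$ after taking $\theta$, then $\lambda$, large. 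The same computations with the roles of the indices interchanged dispose of $\overline{v}$ and $\underline{v}$. Thus $(\mathrm{a}_2)$ holds, and Theorem~\ref{T2} provides $(u_0,v_0)\in W_b^{1,p_1}(\Omega)\times W_b^{1,p_2}(\Omega)$ with $\underline{u}\le u_0\le\overline{u}$, $\underline{v}\le v_0\le\overline{v}$.

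Finally, nodality. By $(\mathrm{j}_2)$ of Lemma~\ref{L3}, $z_{i,\delta}\ge\tfrac{1}{2}z_i\ge\tfrac{l}{2}d>\tfrac{l\delta}{2}$ on $\Omega\setminus\overline{\Omega}_\delta$, hence $\underline{u}>0$ and $\underline{v}>0$ there, so $u_0,v_0$ are strictly positive on the nonempty open set $\Omega\setminus\overline{\Omega}_\delta$; on the other hand $z_1\le Ld$ gives $\overline{u}(x)\to-\lambda^{p_1'}(L\delta/\lambda^\theta)^{\omega_1}<0$ and $\overline{v}(x)\to-\lambda^{p_2'}(L\delta/\lambda^\theta)^{\omega_2}<0$ as $d(x)\to0$, so $u_0\le\overline{u}<0$ and $v_0\le\overline{v}<0$ in a neighbourhood of $\partial\Omega$. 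Hence $(u_0,v_0)$ is a nodal solution, negative near $\partial\Omega$, as asserted. The main obstacle is the verification of \eqref{c2}--\eqref{c3}: near $\partial\Omega$ both $-\Delta_{p_i}\overline{u}$ and $\lambda h_i$ diverge like $d^{\gamma_i}$, and everything hinges on the divergent part of $-\Delta_{p_i}\overline{u}$ carrying the small factor $\lambda^{-\theta p_i}=\omega_i-1$, so that $-\Delta_{p_i}\overline{u}+\lambda d^{\gamma_i}$ stays comparable to $\tfrac{\lambda}{2}d^{\gamma_i}$; correctly ordering the limits (large $\lambda$, then large $\theta$, with $\delta$ small) and keeping track of which constant depends on which parameter is the delicate bookkeeping.
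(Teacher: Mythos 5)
Your proposal is correct and follows essentially the same route as the paper: the same pair \eqref{2*}--\eqref{32}, the same reduction of \eqref{c2}--\eqref{c3} to pointwise inequalities on $\Omega_\delta$ and $\Omega\setminus\overline{\Omega}_\delta$ using the boundary facts \eqref{14} and \eqref{24} with Green's formula, the same use of $(\mathrm{h}_2)$ for the sub-solution and of $(\mathrm{h}_1)$, \eqref{c} for the super-solution, and the same nodality argument via \eqref{n5}--\eqref{n6}. Only be careful to state the reduction in the direction actually needed (pointwise inequality plus the sign of $\frac{\partial\underline{u}}{\partial\eta}$, resp.\ $\frac{\partial\overline{u}}{\partial\eta}=0$, implies the weak inequality tested against nonnegative $\varphi$, since $\gamma_0(\varphi)\geq 0$), rather than as an equivalence obtained by ``integrating by parts''.
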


\begin{proof}
Assumption $(\mathrm{h}_{1})$ evidently forces $(\mathrm{a}_{1})$ of Section %
\ref{S3}, while Lemma \ref{L3} gives $\underline{u} \leq\overline{u}$ and $%
\underline{v}\leq\overline{v}$. Fix $\delta >0$ fulfilling 
\begin{equation*}
\frac{l\delta }{2\lambda }<\min \{\rho _{1},\rho _{2}\}.
\end{equation*}
We claim that \eqref{c2} holds. To see this, pick $(u,v)\in
W^{1,p_{1}}(\Omega )\times W^{1,p_{2}}(\Omega)$ within $[\underline{u},%
\overline{u}]\times[\underline{v},\overline{v}]$. Due to \eqref{12*}, Lemma %
\ref{L3} yields 
\begin{equation}  \label{nn1}
\min \{u,v\}\geq \min \{\underline{u},\underline{v}\}\geq \frac{l(d-\delta )%
}{2\lambda }\geq-\frac{l\delta }{2\lambda} >-\max \{\rho _{1},\rho _{2}\}.
\end{equation}
From \eqref{15*} it follows 
\begin{equation}  \label{nn2}
u\leq \overline{u}\leq \lambda ^{p_{1}^{\prime }}(Ld)^{\omega _{1}},\quad
v\leq \overline{v}\leq \lambda ^{p_{2}^{\prime }}(Ld)^{\omega_{2}}.
\end{equation}
Hence, on account of \eqref{nn1}--\eqref{nn2}, 
\begin{equation}
-\rho_1<-\frac{l\delta }{2\lambda }\leq \frac{l(d-\delta )}{2\lambda }\leq
u\leq \lambda ^{p_{1}^{\prime }}(Ld)^{\omega _{1}}\leq \lambda
^{p_{1}^{\prime }}C_{1},  \label{n2}
\end{equation}
\begin{equation*}
-\rho_2<-\frac{l\delta }{2\lambda }\leq \frac{l(d-\delta )}{2\lambda }\leq
v\leq \lambda ^{p_{2}^{\prime }}(Ld)^{\omega _{2}}\leq \lambda
^{p_{1}^{\prime }}C_{2},
\end{equation*}
with $C_i:=(L\ d_{\ast})^{w_i}$, $i=1,2$, and $d_*:=\max_{\overline{\Omega}}d
$. Now, \eqref{ha} yields $\bar{\eta}_{m_1}>0$ such that 
\begin{equation}
f(x,s,t)>-m_{1}  \label{3}
\end{equation}
provided $x\in \Omega $, $|s|<\bar{\eta}_{m_{1}}$, $\frac{l(d(x)-\delta )}{%
2\lambda }\leq t\leq \lambda ^{p_{2}^{\prime }}(Ld(x))^{\omega _{2}}$.
Likewise, via \eqref{hb} we obtain 
\begin{equation}
g(x,s,t)>-m_{2}  \label{3*}
\end{equation}
once $x\in \Omega $, $\frac{l(d(x)-\delta )}{2\lambda}\leq s\leq
\lambda^{p_1^{\prime }}(Ld(x))^{\omega_1}$, $|t|<\bar{\eta}_{m_2}$.\newline
Pick any $x\in \Omega\setminus\overline{\Omega }_{\delta }$. By \eqref{2}, %
\eqref{50}, and \eqref{n2}--\eqref{3} one has, after increasing $\lambda $
when necessary, 
\begin{equation}
\begin{split}
-\Delta _{p_{1}}\underline{u}(x)-\lambda
h_{1}(x)=\lambda^{-(p_{1}-1)}-\lambda h_{1}(x) & =\lambda
^{-(p_{1}-1)}-\lambda d(x)^{\gamma_{1}} \\
& <-m_{1}<f(x,\underline{u}(x),v(x)).
\end{split}
\label{n4}
\end{equation}
If $x\in \Omega _{\delta }$ then, thanks to \eqref{2}, \eqref{5}, \eqref{50}%
, and \eqref{n2}--\eqref{3}, 
\begin{equation}
\begin{split}
-\Delta _{p_{1}}\underline{u}(x)-\lambda h_{1}(x) &
=-\lambda^{-(p_{1}-1)}\lambda ^{\theta p_{1}}d(x)^{\gamma _{1}}-\lambda
h_{1}(x) \\
& \leq \left[ \lambda -\lambda ^{(\theta -1)p_{1}+1}\right] d(x)^{\gamma_{1}}
\\
& <-m_{1}<f(x,\underline{u}(x),v(x))
\end{split}
\label{n4*}
\end{equation}
for all $\lambda,\theta >0$ sufficiently large. Gathering \eqref{1}, %
\eqref{2*}, \eqref{n4}--\eqref{n4*} together we get 
\begin{equation}
-\Delta _{p_{1}}\underline{u}\leq f(\cdot,\underline{u},v)+\lambda h_{1}.
\label{46}
\end{equation}
A quite similar argument, which employs \eqref{3*} instead of \eqref{3},
furnishes 
\begin{equation}
-\Delta _{p_{2}}\underline{v}\leq g(\cdot,u,\underline{v})+\lambda h_{2}.
\label{46*}
\end{equation}
Finally, test \eqref{46}--\eqref{46*} with $(\varphi ,\psi )\in
W_{+}^{1,p_{1}}(\Omega )\times W_{+}^{1,p_{2}}(\Omega )$ and recall %
\eqref{14}, besides Green's formula \cite{CF}, to arrive at 
\begin{equation*}
\begin{split}
\int_{\Omega }|\nabla \underline{u}|^{p_{1}-2}\nabla \underline{u}%
\nabla\varphi \,\mathrm{d}x & \leq \int_{\Omega }|\nabla \underline{u}%
|^{p_{1}-2}\nabla \underline{u}\nabla \varphi \,\mathrm{d}x-\left\langle 
\frac{\partial \underline{u}}{\partial \eta _{p_{1}}},\gamma
_{0}(\varphi)\right\rangle _{\partial \Omega } \\
& =\int_{\Omega }-\Delta _{p_{1}}\underline{u}\ \varphi \,\mathrm{d}x \\
& \leq \int_{\Omega }\left( f(\cdot ,\underline{u},v)+\lambda h_{1}\right)
\varphi \,\mathrm{d}x,
\end{split}%
\end{equation*}
\begin{equation*}
\begin{split}
\int_{\Omega }|\nabla \underline{v}|^{p_{2}-2}\nabla \underline{v}\nabla\psi
\,\mathrm{d}x & \leq \int_{\Omega }|\nabla\underline{v}|^{p_{2}-2}\nabla 
\underline{v}\nabla \psi \,\mathrm{d}x-\left\langle \frac{\partial 
\underline{v}}{\partial \eta _{p_{2}}},\gamma _{0}(\psi)\right\rangle
_{\partial \Omega } \\
& =\int_{\Omega }-\Delta _{p_{2}}\underline{v}\ \psi \,\mathrm{d}x \\
& \leq \int_{\Omega }\left( g(\cdot ,u,\underline{v})+\lambda
h_{2}\right)\psi \,\mathrm{d}x,
\end{split}%
\end{equation*}
because $\gamma _{0}(w)\geq 0$ whatever $w\in W_{+}^{1,p_{i}}(\Omega )$, see 
\cite[p. 35]{CLM}. Here, $\gamma _{0}$ is the trace operator on $%
\partial\Omega $, 
\begin{equation}
\frac{\partial w}{\partial \eta _{p_{i}}}:=|\nabla w|^{p_{i}-2}\frac{%
\partial w}{\partial \eta } \quad \forall \,w\in W^{1,p_{i}}(\Omega )\cap
C^{1}(\overline{\Omega }),  \label{conode}
\end{equation}
while $\left\langle \cdot ,\cdot \right\rangle _{\partial \Omega }$ denotes
the duality brackets for the pair 
\begin{equation*}
(W^{1/p_{i}^{\prime },p_{i}}(\partial \Omega ),W^{-1/p_{i}^{\prime
},p_{i}^{\prime }}(\partial \Omega )).
\end{equation*}
Let us next show that the functions $\overline{u}$ and $\overline{v}$ given
by \eqref{32} satisfy \eqref{c3}. With this aim, pick $(u,v)\in
W^{1,p_{1}}(\Omega )\times W^{1,p_{2}}(\Omega )$ such that $\underline{u}%
\leq u\leq \overline{u}$, $\underline{v} \leq v\leq \overline{v}$. From %
\eqref{n2} and \eqref{c} it follows 
\begin{equation*}  \label{26}
\begin{split}
f(\cdot,\overline{u},v) & \leq M_1\left(1+|\overline{u}|^{\alpha_1}\right)%
\left(1+|v|^{\beta_1}\right) \\
& \leq M_1\left(1+C_1^{\alpha_1}\lambda^{p_1^\prime\alpha_1}\right)
\left(1+C_2^{\beta_1}\lambda^{p_2^\prime\beta_2}\right) \\
& \leq 2M_1C_1^{\alpha_1}C_2^{\beta_1}\lambda^q,
\end{split}%
\end{equation*}
provided $\lambda$ is big enough. Hence, 
\begin{equation}  \label{cla1}
f(\cdot,\overline{u},v)\leq C \lambda^q,
\end{equation}
where $C:= 2M_1C_1^{\alpha_1}C_2^{\beta_1}$. By \eqref{48}--\eqref{49} one
has 
\begin{equation}  \label{422425bis}
-\Delta _{p_1}\overline{u}\geq \left\{ 
\begin{array}{ll}
\lambda^{p_1}\left[ l\delta -\lambda^{-\theta p_1}\left(p_1-1\right) \hat{L}%
^{p_1}\right] L^{\gamma _{1}}d^{\gamma_1} & \text{in}\;\;\Omega \backslash 
\overline{\Omega}_{\delta}, \\ 
-\lambda ^{(1-\theta )p_1}2^{p_1-1}(p_1-1)\hat{L}^{p_1}L^{\gamma_{1}}d^{%
\gamma_{1}} & \text{in}\;\;\Omega_{\delta }.%
\end{array}
\right.
\end{equation}
Moreover, 
\begin{equation}  \label{cla2}
\lambda^{p_1-1}\left[ l\delta -\lambda^{-\theta p_1}\left(p_1-1\right) \hat{L%
}^{p_1}\right] L^{\gamma_1}d(x)^{\gamma_1} \geq C+d(x)^{\gamma
_{1}}\quad\forall\, x\in\Omega\setminus\overline{\Omega }_{\delta },
\end{equation}
In fact, after increasing $\lambda $ and $\theta $ if necessary, we achieve 
\begin{equation*}
\begin{split}
\lambda^{p_1}\left[ l\delta -\lambda^{-\theta p_1}\left(p_1-1\right) \hat{L}%
^{p_{1}}\right] L^{\gamma_1}d(x)^{\gamma_1} & \geq \frac{l\delta }{2}\hat{L}%
^{p_{1}}d_{\ast }^{\gamma _{1}}\lambda ^{p_{1}} \\
& \geq \lambda (C+\delta^{\gamma_1}) \\
& \geq\lambda (C+d(x)^{\gamma_1}),
\end{split}
\quad x\in\Omega\setminus\overline{\Omega }_\delta,
\end{equation*}
with $d_{\ast }:=\max_{\overline{\Omega}}d$. Thus, \eqref{422425bis}--%
\eqref{cla2} and \eqref{cla1} yield 
\begin{equation*}
-\Delta_{p_1}\overline{u}\geq f(\cdot,\overline{u},v)+\lambda h_1\;\;\text{in%
}\;\;\Omega\setminus\overline{\Omega}_\delta.
\end{equation*}
Let now $x\in {\Omega }_{\delta }$. Inequalities \eqref{cla1}--%
\eqref{422425bis} entail 
\begin{equation*}
\begin{split}
-\Delta _{p_1}\overline{u}(x)+\lambda d(x)^{\gamma _{1}} & \geq
d(x)^{\gamma_1}\left( \lambda-\lambda ^{(1-\theta)p_1}2^{p_1-1}(p_1-1)\hat{L}%
^{p_1}L^{\gamma_1}\right) \\
& \geq \delta ^{\gamma_1}\left(\lambda
-\lambda^{(1-\theta)p_1}2^{p_{1}-1}(p_{1}-1)\hat{L}^{p_1}L^{\gamma_{1}}%
\right) \\
& \geq \delta ^{\gamma _{1}}\frac{\lambda }{2}\geq C\lambda^{q}\geq f(x,%
\overline{u}(x),v(x))
\end{split}%
\end{equation*}
for any $\lambda,\theta>0$\ big enough, that is 
\begin{equation*}
-\Delta _{p_{1}}\overline{u}\geq f(\cdot,\overline{u},v)+\lambda h_{1}\;\;%
\text{in}\;\;\Omega _{\delta }.
\end{equation*}
Summing up, 
\begin{equation*}
-\Delta _{p_{1}}\overline{u}\geq f(\cdot,\overline{u},v)+\lambda h_{1}\;\;%
\text{on the whole}\;\;\Omega.
\end{equation*}
Finally, test with $\varphi \in W_{+}^{1,p_{1}}(\Omega )$ and recall %
\eqref{24}, besides \eqref{conode}, to get 
\begin{equation*}
\begin{split}
\int_{\Omega }|\nabla \overline{u}|^{p_{1}-2}\nabla \overline{u}%
\nabla\varphi \,\mathrm{d}x & =\int_{\Omega }|\nabla \overline{u}%
|^{p_{1}-2}\nabla \overline{u}\nabla \varphi \,\mathrm{d}x -\left\langle 
\frac{\partial \overline{u}}{\partial \eta _{p_{1}}},\gamma _{0}(\varphi
)\right\rangle_{\partial \Omega } \\
& \geq \int_{\Omega }\left( f(\cdot ,\overline{u},v)+\lambda
h_{1}\right)\varphi \,\mathrm{d}x,
\end{split}%
\end{equation*}
as desired. Analogously, one has 
\begin{equation*}
\int_{\Omega }|\nabla \overline{v}|^{p_{2}-2}\nabla \overline{v}\nabla \psi\,%
\mathrm{d}x \geq \int_\Omega\left( g(\cdot,u,\overline{v})+\lambda
h_2\right) \psi \,\mathrm{d}x\;\;\forall \,\psi \in W_{+}^{1,p_1}(\Omega).
\end{equation*}
Therefore, $(\underline{u},\underline{v})$ and $(\overline{u},\overline{v})$
satisfy assumption $(\mathrm{a}_{2})$, whence Theorem \ref{T2} can be
applied, and there exists a solution $(u_{0},v_{0})\in W_b^{1,p_1}(\Omega
)\times W_b^{1,p_2}(\Omega )$ of problem \eqref{p} such that 
\begin{equation}
\underline{u}\leq u_{0}\leq \overline{u},\quad \underline{v}\leq v_{0}\leq 
\overline{v}.  \label{22}
\end{equation}
Moreover, $(u_{0},v_{0})$ is nodal. In fact, through \eqref{32} and %
\eqref{12*} we obtain 
\begin{equation*}
\begin{split}
\overline{u}=\lambda ^{p_1^{\prime }}\left( {z_1}^{\omega_1}-(\lambda^{-%
\theta}L\delta )^{\omega _{1}}\right) & \leq \lambda^{p_{1}^{\prime }}\left[
(Ld)^{\omega _{1}}-(\lambda ^{-\theta }L\delta)^{\omega _{1}}\right] \\
& =\lambda^{p_{1}^{\prime }}L^{\omega_{1}}\left( d^{\omega_{1}}-(\lambda
^{-\theta }\delta )^{\omega _{1}}\right) ,
\end{split}%
\end{equation*}
\begin{equation*}
\begin{split}
\overline{v}=\lambda ^{p_{2}^{\prime }}\left( z_{2}^{\omega_{2}}-(\lambda
^{-\theta }L\delta )^{\omega _{2}}\right) & \leq \lambda^{p_{2}^{\prime }}%
\left[ (Ld)^{\omega _{2}}-(\lambda ^{-\theta }L\delta)^{\omega _{2}}\right]
\\
& =\lambda ^{p_{2}^{\prime }}L^{\omega _{2}}\left( d^{\omega_{2}}-(\lambda
^{-\theta }\delta )^{\omega _{2}}\right),
\end{split}%
\end{equation*}
which actually means 
\begin{equation}  \label{n5}
\max \{\overline{u}(x),\overline{v}(x)\}<0\;\;\text{provided}%
\;\;d(x)<\lambda ^{-\theta }\delta .
\end{equation}
Gathering \eqref{2*} and \eqref{12*} together yields 
\begin{equation*}
\begin{split}
\underline{u}& =\frac{1}{\lambda}\left( z_{1,\delta }-\frac{l\delta }{2}%
\right) \geq \frac{l}{2\lambda }(d-\delta ), \\
\underline{v}& =\frac{1}{\lambda}\left( z_{2,\delta }-\frac{l\delta }{2}%
\right) \geq \frac{l}{2\lambda }(d-\delta ).
\end{split}%
\end{equation*}
Consequently, 
\begin{equation}
\min \{\underline{u}(x),\underline{v}(x)\}>0\;\;\text{as soon as}%
\;\;d(x)>\delta .  \label{n6}
\end{equation}
On account of \eqref{22}--\eqref{n6}, the conclusion follows.
\end{proof}

Finding positive solutions is a much simpler matter.

\begin{theorem}
\label{T3} If $(\mathrm{h}_1)$--$(\mathrm{h}_2)$ hold and $\gamma_1,\gamma_2$
are given by \eqref{16}, with $\lambda,\theta>0$ sufficiently large, then %
\eqref{p} admits a solution $(u^\ast,v^\ast)\in W^{1,p_1}_b (\Omega)\times
W^{1,p_2}_b(\Omega )$ such that 
\begin{equation}  \label{4}
\min\{ u^\ast,v^\ast\}\geq c\,d
\end{equation}
for some $c>0$.
\end{theorem}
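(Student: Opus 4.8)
The plan is to re-run the argument of the proof of Theorem \ref{T1}, after lifting the sub- and super-solutions of \eqref{2*}--\eqref{32} by the \emph{same} constant $\frac{l\delta}{2\lambda}>0$, so that the sub-solution becomes nonnegative and $(\mathrm{h}_2)$ still applies at it. Concretely, with $\delta>0$ small and $\lambda,\theta>0$ large enough for Lemma \ref{L3}, put
\[
\underline{u}^*:=\underline{u}+\frac{l\delta}{2\lambda}=\frac1\lambda z_{1,\delta},\qquad \underline{v}^*:=\underline{v}+\frac{l\delta}{2\lambda}=\frac1\lambda z_{2,\delta},
\]
\[
\overline{u}^*:=\overline{u}+\frac{l\delta}{2\lambda},\qquad \overline{v}^*:=\overline{v}+\frac{l\delta}{2\lambda}.
\]
Adding the same constant changes neither the $p_i$-Laplacians nor the normal traces on $\partial\Omega$, so \eqref{14}, \eqref{24} and \eqref{50} persist with the starred functions, while $\underline{u}^*\le\overline{u}^*$, $\underline{v}^*\le\overline{v}^*$ are precisely the conclusion of Lemma \ref{L3}. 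By \eqref{12*}, conclusion $(\mathrm{j}_2)$, and a comparison of \eqref{1} with \eqref{5bis} (which gives $z_{i,\delta}\le z_i\le Ld$), one has
\[
0\le\frac{l}{2\lambda}\,d\le \underline{u}^*,\ \underline{v}^*\le\frac{L}{\lambda}\,d\le\frac{Ld_*}{\lambda},\qquad d_*:=\max_{\overline\Omega}d .
\]

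Consequently $(\mathrm{a}_1)$ holds by $(\mathrm{h}_1)$, and only the reaction terms in $(\mathrm{a}_2)$ need re-checking. For the sub-solutions, $\|\underline{u}^*\|_\infty,\|\underline{v}^*\|_\infty\le Ld_*/\lambda$ is arbitrarily small for $\lambda$ large and $\underline{u}^*,\underline{v}^*\ge0>-\rho_i$, so $(\mathrm{h}_2)$ yields $f(x,\underline{u}^*(x),v)>-m_1$ and $g(x,u,\underline{v}^*(x))>-m_2$ on $[\underline{u}^*,\overline{u}^*]\times[\underline{v}^*,\overline{v}^*]$; this is exactly the input of \eqref{n4}--\eqref{n4*}, so those computations give $-\Delta_{p_1}\underline{u}^*\le f(\cdot,\underline{u}^*,v)+\lambda h_1$ and $-\Delta_{p_2}\underline{v}^*\le g(\cdot,u,\underline{v}^*)+\lambda h_2$ pointwise, and then \eqref{14} together with Green's formula (as in the proof of Theorem \ref{T1}) gives \eqref{c2}. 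For the super-solutions, $\overline{u}^*\le\lambda^{p_1'}(Ld_*)^{\omega_1}+\frac{l\delta}{2\lambda}\le 2\lambda^{p_1'}(Ld_*)^{\omega_1}$ and $\overline{v}^*\le 2\lambda^{p_2'}(Ld_*)^{\omega_2}$ once $\lambda$ is large, so $(\mathrm{h}_1)$ and \eqref{c} give $f(\cdot,\overline{u}^*,v)\le C\lambda^q$ and $g(\cdot,u,\overline{v}^*)\le C\lambda^q$ with $C$ independent of $\lambda$, i.e. \eqref{cla1} with a slightly larger constant; combining this with \eqref{24}, \eqref{47}--\eqref{49} and \eqref{422425bis} exactly as in the proof of Theorem \ref{T1} produces \eqref{c3}. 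Hence Theorem \ref{T2} applies and delivers a solution $(u^*,v^*)\in W_b^{1,p_1}(\Omega)\times W_b^{1,p_2}(\Omega)$ of \eqref{p} with $\underline{u}^*\le u^*\le\overline{u}^*$, $\underline{v}^*\le v^*\le\overline{v}^*$; in particular $u^*\ge\underline{u}^*=\frac1\lambda z_{1,\delta}\ge\frac{l}{2\lambda}d$ and, likewise, $v^*\ge\frac{l}{2\lambda}d$, which is \eqref{4} with $c:=\frac{l}{2\lambda}$.

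The only thing that genuinely has to be verified is the re-examination of the reaction terms just described, and there the point is simply that the shift by $\frac{l\delta}{2\lambda}$ leaves the differential operators and the boundary traces untouched while perturbing the arguments of $f$ and $g$ by $O(\lambda^{-1})$ — too little to spoil the lower bound furnished by $(\mathrm{h}_2)$, and, thanks to the sublinearity \eqref{c}, absorbed into the upper bound furnished by $(\mathrm{h}_1)$. No estimate beyond those already obtained for Theorem \ref{T1} is needed; since here, unlike there, no sign change of the solution has to be engineered, this is indeed the much simpler matter announced above.
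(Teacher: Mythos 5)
Your proof is correct and is essentially the paper's own argument: both take $\underline{u}^{\ast}=\tfrac{1}{\lambda}z_{1,\delta}$, $\underline{v}^{\ast}=\tfrac{1}{\lambda}z_{2,\delta}$ as sub-solutions and a constant shift of $\lambda^{p_i'}z_i^{\omega_i}$ as super-solutions, verify $(\mathrm{a}_2)$ by reusing the estimates from Theorem \ref{T1}, invoke Theorem \ref{T2}, and deduce \eqref{4} from Lemma \ref{L3}$(\mathrm{j}_2)$ and \eqref{12*}. The only cosmetic difference is that you shift both pairs by the same constant $\tfrac{l\delta}{2\lambda}$ so that $\underline{u}^{\ast}\le\overline{u}^{\ast}$ is immediate from Lemma \ref{L3}, whereas the paper takes $\overline{u}^{\ast}=\lambda^{p_1'}z_1^{\omega_1}$ directly (a different shift) and relies on the comparison argument of Lemma \ref{L3}'s proof to recover the ordering.
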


\begin{proof}
Keep the same notation as before and define 
\begin{equation*}
(\underline{u}^{\ast},\underline{v}^{\ast}):=\frac{1}{\lambda}%
(z_{1,\delta},z_{2,\delta}),\quad (\overline{u}^\ast,\overline{v}%
^\ast):=(\lambda^{p_1^\prime}z_1^{\omega_1},\lambda^{p_2^\prime}z_2^{%
\omega_2}).
\end{equation*}
The arguments exploited in the proof of Theorem \ref{T1} ensure here that $( 
\underline{u}^\ast,\underline{v}^\ast)$ and $(\overline{u}^\ast, \overline{v}%
^\ast)$ fulfill $(\mathrm{a}_2)$ provided $\lambda,\theta>0$ are big enough.
So, thanks to Theorem \ref{T2}, we obtain a solution $(u^\ast,v^\ast)\in
W^{1,p_1}_b(\Omega )\times W^{1,p_2}_b(\Omega )$ of \eqref{p} lying in $[%
\underline{u}^{\ast },\overline{u}^{\ast }]\times \lbrack \underline{v}%
^{\ast },\overline{v}^{\ast }]$. Finally, Lemma \ref{L3} and \eqref{12*}
easily entail \eqref{4}.
\end{proof}

\vskip0.5cm \centerline{\textsc{Acknowledgements}} \smallskip The authors
thank S.J.N. Mosconi for pointing out Lemma 1 and helping to prove Lemma 2.

This work was partially performed when the third-named author visited
Catania and Reggio Calabria universities, to which he is grateful for the
kind hospitality.

\vskip0.5cm

\end{document}